\documentclass[smallextended,
    envcountsect,
    envcountsame,
    envcountreset]{svjour3}
    
\usepackage{graphicx}
\usepackage{booktabs}
\usepackage{amsmath}
\usepackage{amstext}
\usepackage{amsfonts}
\usepackage{amssymb}
\usepackage{amsxtra} 
\usepackage{algorithm}
\usepackage{algorithmic}
\usepackage{hyperref}
\usepackage{bbm}
\usepackage{url}
\usepackage[dvipsnames]{xcolor}
\usepackage{tikz}
\usepackage{float} 
\usetikzlibrary{external}
\tikzexternalize[prefix=tikzext/]

\usepackage{pgfplots}
\usepackage{pgfplotstable}
\pgfplotsset{compat=newest}
\pgfplotsset{plot coordinates/math parser=false}
\newlength\figureheight
\newlength\figurewidth
\setlength{\figurewidth}{\textwidth}
\usepackage{xargs} 

\newcolumntype{M}[1]{>{\centering\arraybackslash}m{#1}}
\usepackage[square,numbers,sort&compress]{natbib}

\graphicspath{ {./figures/} }

\smartqed

\newcommand \NN {\mathbb{N}}
\newcommand \RR {\mathbb{R}}

\newcommand \Rcal {\mathcal{R}}

\DeclareMathOperator*{\argmin}{argmin}

\DeclareMathOperator{\dist}{dist}

\newcommand{\scp}[2]{\langle #1 \,,\, #2\rangle}

\newcommand{%
     \scalebox{}{\input{}}  
}[2]{%
     \scalebox{#1}{\input{#2}}  
}


\DeclareMathOperator{\signum}{sign}     







\newcommand{\eqdef}{\overset{\text{def}}{=}} 

\numberwithin{equation}{section}
\graphicspath{ {./figures/} }
\usepackage{subfig}

\date{}
\journalname{}

\begin{document}

\title{Acceleration and restart for the randomized Bregman-Kaczmarz method\thanks{This work has received funding from: the European Union's Framework Programme for Research and Innovation Horizon 2020 (2014-2020) under the Marie Sklodowska-Curie Grant Agreement No. 861137;  UEFISCDI, Romania, PN-III-P4-PCE-2021-0720, under project L2O-MOC, no 70/2022.}}
\titlerunning{Acceleration and restart for  the randomized Bregman-Kaczmarz method}

\author{Lionel~Tondji \and Ion~Necoara \and Dirk~A. Lorenz}
\institute{Lionel Tondji
  \at Institute for Analysis and Algebra, TU Braunschweig, 38092 Braunschweig, Germany.
  \at Center for Industrial Mathematics, University of Bremen, Postfach 330440, 28334 Bremen, Germany,\\
  \email{l.ngoupeyou-tondji@tu-braunschweig.de}
  \and Ion Necoara
  \at Automatic Control and Systems Engineering Department, National University of Science and Technology Politehnica Bucharest, 060042 Bucharest, Romania.
  \at Gheorghe Mihoc-Caius Iacob Institute of Mathematical Statistics and Applied Mathematics of the Romanian Academy, 050711 Bucharest, Romania,\\
  \email{ion.necoara@upb.ro}
  \and Dirk A. Lorenz
  \at Institute for Analysis and Algebra, TU Braunschweig, 38092 Braunschweig, Germany.
  \at Center for Industrial Mathematics, University of Bremen, Postfach 330440, 28334 Bremen, Germany,\\
  \email{d.lorenz@uni-bremen.de}
  }

\maketitle

\begin{abstract}
Optimizing strongly convex functions subject to linear constraints is a fundamental problem with numerous applications. In this work, we propose a block (accelerated) randomized Bregman-Kaczmarz method that only uses a  block of constraints in each iteration to tackle this problem. We consider a dual formulation of this problem in order to deal in an efficient way with the linear constraints. Using convex tools, we show that the corresponding dual function satisfies the Polyak-Lojasiewicz (PL) property, provided that the primal objective function is strongly convex and verifies additionally some other mild assumptions.  However, adapting the existing theory on coordinate descent methods to our dual formulation can only give us sublinear convergence results in the dual space. In order to obtain convergence results in some criterion corresponding to the primal (original) problem, we transfer our algorithm to the primal space, which combined with the PL property allows us to get linear convergence rates. More specifically, we provide a theoretical analysis of the convergence of our proposed method under different assumptions on the objective and demonstrate in the numerical experiments its superior efficiency and speed up compared to existing methods for the same problem.
\end{abstract}

\keywords{Linear systems,  sparse solutions,  randomized Bregman-Kaczmarz method, acceleration, restart, convergence rates. }

\subclass{65F10, 68W20, 90C25}

\section{Introduction}
In this work, we consider the fundamental problem of approximating solutions of  large-scale consistent linear systems of the form:
\begin{equation}
\label{eq:LS}
\mathbf{A}x = b
\end{equation}
with matrix $\mathbf{A} \in \RR^{m \times n}$ and right hand side $b \in \RR^m$.  We consider the case that the full matrix is not accessible simultaneously, but that one
can only work with a single or block of rows of the system~\eqref{eq:LS} at a time. Given the possibility of multiple solutions of~\eqref{eq:LS}, we set out to find the unique solution, characterized by the function $f$, i.e.,
\begin{equation}
\label{eq:PB}
\hat f \eqdef \min_{x \in \RR^n} f(x) \quad \text{subject to} \quad  \mathbf{A}x=b,
\end{equation}
with a general strongly convex function $f$. However, we will not assume smoothness (not even differentiability) of $f$. One possible example is $f(x) = \lambda \cdot \|x\|_1 + \frac{1}{2}\|x\|^2_2$ and it is known that this function favors sparse solutions for appropriate choices of $\lambda > 0$, see~\cite{cai2009linearized,LSW14,LWSM14,LS19,yin2010analysis}.  Similarly, by dividing
the components of $x$ into $M$ groups, $x = (x_{(1)},\dots,x_{(M)})$ with $x_{(j)} \in \RR^{n_j}$, the function $f(x) = \lambda \cdot \sum_{j=1}^{M} \|x_{(j)}\|_2 + \frac{1}{2} \cdot \|x\|_2^2$ favours group sparsity. We assume throughout the paper that both $m$ and $n$ are large and that the system is consistent. The $i$-th row of $\mathbf{A}$ is denoted by $a_{i}^{\top}$ and we assume $a_i \neq 0$ for all $i \in [m]:=\{1,\dots,m\}$. Since $f$ is strongly convex, problem~\eqref{eq:PB} has a unique solution $\hat x$. In applications,  it is usually sufficient to find a point which is not too far from $\hat x$. In particular, one chooses the error tolerance $\varepsilon >0$ and aims to find a point $x$ satisfying $\|x - \hat x\|^2_{2} \leq \varepsilon$. Since our method is of a stochastic nature,  the iterates $x$ are random vectors. Hence, our goal will be to compute approximate solutions of~\eqref{eq:PB} that fulfills $\mathbb{E}[\|x - \hat x\|^2_{2}] \leq \varepsilon$, where $\mathbb{E}[ \cdot]$ denotes the expectation w.r.t. randomness of the algorithm.

\subsection{Related work}
\noindent The linear system~\eqref{eq:LS} may be so large that full matrix operations are very expensive or even infeasible. Then, it appears desirable to use iterative algorithms
with low computational cost and storage per iteration that produce good approximate solutions of~\eqref{eq:PB} after relatively few iterations.  The Kaczmarz method and its randomized variants~\cite{gower2019adaptive,gower2015randomized,necoara2019faster,SV09} are used to compute the minimum $\ell_2$-norm solutions of consistent linear systems.  In each iteration $k$, a row vector $a_i^\top$
of $\mathbf{A}$ is chosen at random from the system~\eqref{eq:LS} and the current iterate $x^k$ is projected onto the solution space of that equation to obtain $x^{k+1}$. Note that this update rule requires only low cost per iteration and storage of order $\mathcal{O}(n)$. Recently, a new variant of the randomized Kaczmarz (RK) method namely the randomized sparse Kaczmarz method (RSK)~\cite{LWSM14,LS19} with almost the same low cost and storage requirements has shown good performance in approximating sparse solutions of large consistent linear systems. The papers~\cite{LSW14,LS19} analyze this method by interpreting it as a sequential, randomized Bregman projection method (where the Bregman projection is done with respect to the function $f$), while~\cite{P15} connects it with the coordinate descent method via duality.
Variations of RSK including block/averaging variants can be found in~\cite{du2020randomized,necoara2019faster,needell2014paved,P15,tondji2023faster,tondji2023adaptive}, momentum variants~\cite{lorenz2023minimal}, sampling scheme~\cite{tondji2021linear} and adaptations to least squares problems are given in \cite{Du19,schopfer2022extended,zouzias2013randomized}. In those variants, one usually needs to have access to more than one row of the matrix $\mathbf{A}$ at the cost of increasing the memory. Coordinate descent methods are often considered in the context of minimizing composite convex objective functions \cite{fercoq2015accelerated,necoara2016,nesterov2012efficiency}. At each iteration, they update only one coordinate of the vector of variables using partial derivatives and proximal operators rather than the whole
gradient and the full proximal operator, respectively. A randomized accelerated coordinate descent method was proposed in~\cite{nesterov2012efficiency} for smooth convex functions and later it was extended to composite convex functions in e.g., \cite{fercoq2015accelerated,necoara2016}. Note that accelerated methods transform the coordinate descent method, for which the optimality gap decreases as $\mathcal{O}(1/k)$, into an
algorithm with optimal $\mathcal{O}(1/k^2)$ complexity under mild additional computational cost~\cite{fercoq2015accelerated,nesterov2012efficiency}. {By exploiting the connection between coordinate descent methods and Kaczmarz methods, an accelerated method was proposed in~\cite{10178390}  to solve problem~\eqref{eq:PB} with $\mathcal{O}(1/k^2)$ convergence rate. The present study builds upon the initial findings in~\cite{10178390} and expands upon them with an additional restart algorithm (see Algorithm~\ref{alg:RARBK1}), new theoretical analysis for the primal and dual problem (see Section~\ref{sec:lin_conv}), new linear convergence results for Algorithms~\ref{alg:ARBK} and ~\ref{alg:RARBK1} and more detailed experiments}.

\subsection{Contributions}
\label{sec:contributions}
To the best of our knowledge, blocks accelerated Bregman-Kaczmarz methods for solving linearly constrained optimization problems with strongly convex (possibly nonsmooth) objective functions have not been considered before in the literature. In this work, we provide such methods for solving this type of problems. Hence, this study has the following main contributions:
\begin{enumerate}
    \item Beyond interpreting the Bregman-Kaczmarz method as a dual coordinate descent method, we propose a primal accelerated coordinate gradient algorithm using only blocks of rows of the matrix defining the constraints. We refer to this as the Block Accelerated Randomized Bregman-Kaczmarz method (ARBK) with more details in Algorithm~\ref{alg:ARBK}. We have also extended the method and propose a restart scheme for Algorithm~\ref{alg:ARBK} that exhibits faster convergence.

    \item  Using  tools from convex theory, we show that the corresponding dual function satisfies the Polyak-Lojasiewicz (PL) property, provided that the primal objective function is strongly convex and verifies additionally some other mild assumptions. By exploiting the connection between primal and dual updates, we obtain convergence as a byproduct and convergence rates under different assumptions on the objective function which have not been available so far. In particular, we prove that our restart accelerated method leads to faster (i.e., linear rate) convergence than its standard counterpart. 

    \item We also validate the superior efficiency of our approach  empirically and we provide an implementation of our algorithm in \texttt{Python}.
\end{enumerate}

\subsection{Outline}
The remainder of the paper is organized as follows. Section~\ref{sec:basicnotions} provides notations and a brief overview on convexity and Bregman distances. In Section~\ref{sec:interpretation} we state our method and give an interpretation of it in the dual space. Section~\ref{sec:convergence} provides convergence guarantees for our proposed method. In Section~\ref{sec:numerics}, numerical experiments demonstrate the effectiveness of our method and provide insight regarding its behavior and its hyper-parameters. Finally,
Section~\ref{sec:conclusion} draws some conclusions.


\section{Notation and basic notions}
\label{sec:basicnotions}
For a positive integer $n$ we denote $[n] \eqdef \{1,2,\dots,n\}$. We will assume that the vector $y \in \mathbb{R}^m$ has the following block partition \[y = (y_{(1)}, y_{(2)}, \dots, y_{(M)}),\] composed of $M$ blocks, where $y_{(i)} \in \mathbb{R}^{m_i}, m_i \in \NN$ and $\sum_{i \in [M]} m_i = m$. We used the matrices $\mathbf{U}_i \in \mathbb{R}^{m \times m_i}, i \in [M]$ to partition the identity matrix as $\mathbf{I}_m = (\mathbf{U}_1, \mathbf{U}_2, \dots, \mathbf{U}_{M}) \in \mathbb{R}^{m \times m}$, we get
\begin{equation*}
    y_{(i)} = \mathbf{U}_i^{\top} y \in \mathbb{R}^{m_i}, \;\; \forall y \in \mathbb{R}^{m},  \;\; \forall i \in [M] \quad \text{and} \quad  y = \sum_{i \in [M]} \mathbf{U}_i y_{(i)}. 
\end{equation*}
Moreover, for a given differentiable function $g:\RR^m \to \RR$, the vector of partial derivatives corresponding to the variables in $y_{(i)}$ is $\nabla_{(i)} g(y) = \mathbf{U}^{\top}_i \nabla g(y) \in \mathbb{R}^{m_i}$. A similar partition is applied for $x \in \mathbb{R}^n$, with $\mathbf{V}_i \in \mathbb{R}^{n \times n_i}, n_i \in \NN$ where $\sum_{i \in [M]} n_i = n$. We write $\mathbf{I}_n = (\mathbf{V}_1, \mathbf{V}_2, \dots, \mathbf{V}_{M}) \in \mathbb{R}^{n \times n}$. Hence, we get
\begin{equation*}
    x_{(i)} = \mathbf{V}_i^{\top} x \in \mathbb{R}^{n_i}, \forall x \in \mathbb{R}^{n}, \forall i \in [M] \quad \text{and} \quad  x = \sum_{i \in [M]} \mathbf{V}_i x_{(i)}. 
\end{equation*}
Given a symmetric positive definite matrix $\mathbf{B},$ we denote the induced inner product by 
\begin{equation*}
   \langle x, y \rangle_{\mathbf{B}} \eqdef  \langle x, \mathbf{B} y \rangle = \sum\limits_{i,j \in [n]} x_{(i)}^{\top} \mathbf{B}_{ij} y_{(j)}, \quad x, y \in \RR^{n},
\end{equation*}
and its induced norm by $\|x\|_{\mathbf{B}}^2 \eqdef \langle x, x \rangle_{\mathbf{B}}$. We use the short-hand notation $\|.\|_2$ to mean $\|.\|_{\mathbf{I}_n}$. For an $n\times m$ real matrix $\mathbf{A}$ we denote by $\mathcal{R}(\mathbf{A}), \|\mathbf{A} \|_F, \|\mathbf{A} \|_2$ and $a_i^\top$ its range space, its Frobenius norm, its spectral norm and its $i$-th row, respectively. By $e$ we denote the Euler's number and by $\lceil x \rceil$ we denote the smallest integer greater than or equal to $x$ with $x \in \mathbb{R}$. We use $\textbf{Diag}(d_1, d_2,\dots , d_M)$ to denote the diagonal matrix with $d_1,d_2,\dots, d_M$ on the diagonal. For a random vector $\Theta_j$ that depends on a random index $j \in [n]$ (where $j$ is chosen with probability $p_{j}$) we denote $\mathbb{E} [\Theta_j] \eqdef \sum_{\ell \in [n]} p_\ell \Theta_\ell$ and we will just write $\mathbb{E} [\Theta_j]$ when the probability distribution is clear from the context. Given a vector $x \in \RR^n$, we define the soft shrinkage operator, which acts componentwise on a vector $x$ as
\begin{equation} \label{eq:S}
(S_{\lambda}(x))_j = \max\{|x_j|-\lambda,0\} \cdot \signum(x_j) \quad \forall j \in [n],
\end{equation}
where $x_j$ denotes the $j$-th coordinate of the vector $x$. The indicator function of a set $C$ is denoted by 
\[
\delta_{C}(x) \eqdef 
\begin{cases}
    0, & \text{if } x \in C\\
    +\infty, & \text{if } x \not \in C
  \end{cases} 
\]

\noindent Now, we collect some basic notions on convexity and the Bregman distance, our exposition mainly follows from ~\cite{nesterov2003introductory,Roc:70}.

\subsection{Basic notions}

Let $f:\RR^n \to \RR$ be convex (we assume that $f$ is finite everywhere, hence it is also continuous). The \emph{subdifferential} of $f$ at any $x \in \RR^n$ is defined by
\[
\partial f(x) \eqdef \{x^* \in \RR^n| f(y) \ge f(x) + \langle x^*, y-x \rangle \;\;  \forall y \in \RR^n \},
\]
which is a nonempty, compact and convex set. The function  $f:\RR^n \to \RR$ is said to be \emph{$\sigma$-strongly convex} {with constant $\sigma > 0$}, if for all $x,y \in \RR^n$ and subgradients $x^* \in \partial f(x)$ we have
\[
f(y) \geq f(x) + \langle x^*, y-x \rangle + \tfrac{\sigma}{2} \cdot \|y-x\|_2^2 \,.
\]
If $f$ is $\sigma$-strongly convex, then $f$ is coercive, i.e.
$$
\lim_{\|x\|_2 \to \infty} f(x)=\infty \,.
$$
The {Fenchel conjugate} of a $\sigma$-strongly convex function $f$, denoted $f^{*}:\RR^n \to \RR$,  given by 
\[
f^*(x^*)\eqdef \sup_{y \in \RR^n} \scp{x^*}{y} - f(y),
\]
is also convex, finite everywhere and coercive. Additionally, $f^*$ is differentiable with a \emph{Lipschitz-continuous gradient} with constant $L_{f^*}=\frac{1}{\sigma}$, i.e., for all $x^*,y^* \in \RR^n$ we have
\[
\|\nabla f^*(x^*)-\nabla f^*(y^*)\|_2 \le L_{f^*} \cdot \|x^*-y^*\|_2 \,,
\]
which implies the estimate
\begin{align}  \label{eq:Lip}
f^*(y^*)
\le f^*(x^*)\!  + \scp{\nabla f^*(x^*)}{y^*-x^*} + \tfrac{L_{f^*}}{2}\|x^{*}-y^{*}\|_2^2. 
\end{align}
The function $f^*$ is said to have block-coordinate Lipschitz continuous gradient if there exists $L_{f^*,i}>0$ such that 
\begin{align}
\|\nabla_{(i)} f^*(x^* + V_ih_{(i)}) - \nabla_{(i)} f^*(x^*)\| \leq L_{f^*,i} \cdot \|h_{(i)}\|_2,
\end{align}
for all $x^* \in \RR^n, h_{(i)} \in \RR^{n_i}, i\in [M].$ 

\begin{definition} \label{def:D}
The \emph{Bregman distance} $D_f^{x^*}(x,y)$ between $x,y \in \RR^n$ with respect to a convex function $f$ and a subgradient $x^* \in \partial f(x)$ is defined as
\[
D_f^{x^*}(x,y) \eqdef f(y)-f(x) -\scp{x^*}{y - x}\,.
\]
\end{definition}
Fenchel's equality states that $f(x) + f^*(x^*) = \scp{x}{x^*}$ if $x^*\in\partial f(x)$ and implies that the Bregman distance can be written as
\[
D_f^{x^*}(x,y) = f^*(x^*)-\scp{x^*}{y} + f(y)\,.
\]

\begin{example}[{\cite[Example 2.3]{LS19}}]
\label{exmp:f}
The objective function
\begin{equation} \label{eq:spf}
f(x) \eqdef \lambda \cdot \|x\|_1 + \tfrac{1}{2} \cdot \|x\|_2^{2}
\end{equation}
is strongly convex with constant $\sigma=1$ and its conjugate function can be computed with the soft shrinkage operator from \eqref{eq:S}
\[ 
f^{*}(x^{*}) = \tfrac{1}{2} \cdot \|S_{\lambda}(x^{*})\|_2^{2}, \quad \mbox{with} \quad \nabla f^{*}(x^{*}) = S_{\lambda}(x^{*}) \,.
\]
Its Fenchel conjugate $f^*$ has componentwise Lipschitz gradient with constants $L_{f^*,i} = 1$ and for any $x^*=x+\lambda \cdot s \in \partial f(x)$, with  $s \in \partial (\|x\|_1)$, we have
\[
D_f^{x^*}(x,y)=\frac{1}{2} \cdot \|x-y\|_2^2 + \lambda \cdot(\|y\|_1-\scp{s}{y}) \,.
\]
In particular,  $D_f^{x^*}(x,y)=\frac{1}{2}\|x-y\|_2^2$ for $\lambda = 0.$
\end{example}

\medskip 

\noindent The following inequality is crucial for the convergence analysis of the randomized algorithms. It immediately follows from the definition of the Bregman distance and the assumption of strong convexity of $f$ with constant $\sigma >0$, see~\cite{LSW14}.
For  $x,y \in \RR^n$ and $x^* \in \partial f(x)$ we have
\begin{align} 
\label{eq:D}
D_f^{x^*}(x,y) \geq \frac{\sigma}{2} \|x-y\|_2^2. 
\end{align}

\subsection{Calmness, linear regularity and linear growth conditions}
\label{sec:error_bound_condition}
In this section, we are going to define notions such as calmness, linear regularity, and linear growth ~\cite{schopfer2022extended}, which are necessary conditions for error bounds to hold. Let $B_2$ denote the closed unit ball of the $\ell_2$-norm.

\begin{definition}
\label{ass:calmness}
The (set-valued) subdifferential mapping $\partial f:\RR^n \rightrightarrows \RR^n$ is \emph{calm} at $\hat{x}$ if there are constants $\varepsilon,L>0$ such that
\begin{equation}
\partial f(x) \subset \partial f(\hat{x}) + L \cdot \|x-\hat{x}\|_2 \cdot B_2 \quad \mbox{for any $x$ with} \quad \|x-\hat{x}\|_2\le \varepsilon\,. \label{eq:calm}
\end{equation}
\end{definition}
Note that calmness is a local growth condition akin to Lipschitz-continuity
of a gradient mapping, but for fixed $\hat x$. Of course, any Lipschitz-continuous
gradient mapping is calm everywhere.

\begin{example}
\label{ex:calmness}
    \begin{itemize}
        \item[(a)]  The subdifferential mapping of any convex piecewise linear or quadratic function is calm everywhere. In particular, this holds for the function defined in Eq.~\eqref{eq:spf}.

        \item[(b)] The subdifferential mapping of $f(x) = \lambda \cdot \|x\|_2 + \tfrac{1}{2} \cdot \|x\|_2^{2}$ or of $f(x) = \lambda \cdot \|x\|_1 + \tfrac{1}{2} \cdot \|x\|_2^{2}$ is calm everywhere. For more examples on calmness, we refer the reader to~\cite{schopfer2022extended}.
    \end{itemize}
\end{example}

\begin{definition}
\label{ass:linear_regularity}
Let $\partial f(\hat x) \cap \mathcal{R}(A^{\top}) \neq \emptyset.$ Then, the collection $\{\partial f(\hat x), \mathcal{R}(\mathbf{A}^{\top})\}$ is \emph{linearly regular}, if there is a constant $\nu >0$ such that for all $x^{*} \in \RR^n$ we have
\begin{equation}
\label{eq:linreg}
    \dist(x^{*}, \partial f(\hat x) \cap \mathcal{R}(\mathbf{A}^{\top})) \leq \nu \cdot \bigg(\dist(x^{*}, \partial f(\hat x)) + \dist(x^{*}, \mathcal{R}(\mathbf{A}^{\top}))\bigg).
\end{equation}
\end{definition}
The collection $\{\partial f(\hat x), \mathcal{R}(\mathbf{A}^{\top})\}$ is always linearly regular if, for example, $\partial f(\hat x)$ is polyhedral (which holds e.g., for piecewise linear/quadratic functions $f$).

\begin{definition}
\label{ass:linear_growth}
We say the subdifferential mapping of $f$ \emph{grows at most linearly} if there exist $\rho_1,\rho_2 \ge 0$ such that for all $x \in \RR^n$ and $x^* \in \partial f(x)$ we have
\begin{equation} \label{eq:lineargrowth}
\|x^*\|_2 \le \rho_1 \cdot \|x\|_2 + \rho_2\,.
\end{equation}
\end{definition}
Any Lipschitz-continuous gradient mapping grows at most linearly. Furthermore, the subdifferential mappings of the functions in Example~\ref{ex:calmness} grow at most linearly. Next, we give the definition of the Polyak-Lojasiewicz (PL) inequality which is a lower bound on the size of the gradient as the value of $f$ increases \cite{necoara2019linear}.

\begin{definition} 
Let us consider the class of convex-constrained optimization problems
\[
\textbf{(P):} \quad \hat g = \displaystyle \min_{y \in \mathcal{Y}} g(y),
\]
where $\mathcal{Y} \subseteq \RR^n$ and we denoted by $\mathcal{Y}^* = \displaystyle \argmin_{y \in \mathcal{Y}} g(y)$ the set of optimal solutions of problem (P). We assume that the optimal set $\mathcal{Y}^*$ is nonempty and the optimal value $\hat g$ is finite. Then, a differentiable function $g$ satisfies:
\begin{enumerate}
    \item The \emph{PL inequality} if there exists a constant $\mu > 0$ such that 
\begin{equation}
\label{eq:pl}
    g(y) - \hat g \leq \frac{1}{2\mu} \| \nabla g(y)\|^2_2, \quad \forall y \,\in \mathcal{Y}. 
\end{equation}
\item  The \emph{quadratic growth condition (QG)} if there exists a constant $\mu > 0$ such that 
\begin{equation}
\label{eq:qg}
    g(y) - \hat g \geq \frac{\mu}{2} \dist(y, \mathcal{Y}^*)^2, \quad \forall y \,\in \mathcal{Y}. 
\end{equation}
\end{enumerate}
\end{definition}
\noindent In~\cite[Theorem 2]{karimi2016linear}, it has been shown that for function with Lipschitz-continuous gradient $(PL) \Rightarrow (QG)$. If we further assume that the function is convex then we have $(PL) \equiv (QG)$. Any strongly-convex function satisfies the PL inequality. However, there are many important functions satisfying the PL inequality which are not strongly convex (see e.g., Example \ref{ex:pl_function} and Theorem \ref{th:error_bounds_equality} below).

\begin{example}
\label{ex:pl_function}
Here we give examples of functions satisfying the PL inequality which are not strongly convex.
\begin{enumerate}
    \item For $\lambda >0$, the following function $g$ defined by $g(y) = \tfrac{1}{2} \|S_{\lambda}(A^{\top}y)\|^2_2 - b^{\top}y$ is not strongly convex and satisfies the PL inequality (see Eq.~\ref{eq:pl_constant} for the value of $\mu$).
    \item A  differentiable function $g$ is invex if there exists a vector-valued function $\eta$ such that for any $y$ and $z$, the following inequality holds $g(y) \geq g(z) + \nabla g(z)^{\top} \eta(z,y)$. Note that for invex functions all stationary points are global optima~\cite{karimi2016linear}. As an example in ~\cite{karimi2016linear}, the function  $g(y) = y^2 + 3 \text{sin}^2(y)$ is  invex but non-convex  satisfying PL with $\mu = 1/32$.
\end{enumerate}
\end{example}



\section{Coordinate descent and  Bregman-Kaczmarz methods}
\label{sec:interpretation}
\noindent Note that by a proper scalling of $f$ we can always assume the strong convexity constant $\sigma = 1$. Therefore, in the
sequel we consider 1-strongly convex objective function $f$. To connect the Kaczmarz method to the coordinate descent algorithm, we consider the dual problem to problem \eqref{eq:PB}. Using the conjugate of $f$ we can write the dual objective as the infimum of the Lagrangian $\mathcal{L}(x,y) \eqdef f(x) - y^\top(\mathbf{A}x-b)$ as
\begin{align*}
  \inf_x \mathcal{L}(x,y) &= \inf_x (f(x) - y^\top(\mathbf{A}x-b) ) = b^\top y - f^*(\mathbf{A}^\top y).
\end{align*}
Hence, the dual problem of \eqref{eq:PB} is given by:
\begin{equation}
\label{eq:DP}
\hat \Psi \eqdef \min_{y \in \RR^m}\left[ \Psi(y) := f^*(\mathbf{A}^\top y) - b^\top y\right].
\end{equation}
The primal-dual optimality system
\begin{align}
    \mathbf{A} \hat x = b, \quad
     \hat x = \nabla f^*(\mathbf{A}^\top \hat y),
    \label{eq:opt_sol}
\end{align}
couples the primal and dual optimal solutions $\hat x$ and $\hat y$ respectively. Our objective function in Eq.~\eqref{eq:PB} is strongly convex and the linear system is consistent, therefore there exists a primal solution $\hat x$ and it is unique and strong duality holds in this case. The primal-dual problems can be defined as follows: $\Phi(x) = f(x) + \delta_{\{0\}}(b-\mathbf{A}x)$ and  $ \Psi(y) = -f^*(\mathbf{A}^{\top}y) + b^{\top}y$. Hence, the duality gap is given by:
    \begin{align*}
         \Psi(\hat y) - \Phi(\hat x) = -f^*(\mathbf{A}^{\top} \hat y) + b^{\top} \hat y - f(\hat x) - \delta_{\{0\}}(b-\mathbf{A} \hat x) = - D^{\hat d}_f (\hat x, \hat x) = 0.
    \end{align*}
Therefore, the primal and the dual solutions exist and satisfy equation~\eqref{eq:opt_sol} and thus the optimal solution set $\mathcal{Y}^*$ of~\eqref{eq:DP} is nonempty. Further, the dual objective $\Psi$ is unconstrained, differentiable and its gradient is given by the following expression
\begin{equation}
\label{eq:dual_grad}
    \nabla \Psi(y) = \mathbf{A}\nabla f^*(\mathbf{A}^\top y) - b.
\end{equation}
Moreover, the gradient $\nabla \Psi$ of the dual function is Lipschitz and block-wise Lipschitz continuous w.r.t. the Euclidean norm $\|\cdot\|_2$, with constants \cite{necoara2022linear}:
\begin{equation}\label{eq:Li}
  L_{\Psi}  \eqdef \|\mathbf{A}\|^2_2\ \text{ and }\ L_{\Psi,i}  \eqdef L_i \eqdef \|\mathbf{A}_{(i)}\|_2^2,
\end{equation}
respectively. The following lemma relates the Bregman distance to the optimal primal solution to the distance to optimality of the dual problem.
\begin{lemma}
\label{lm:IterateNormSuboptDual}
Let $b\in\mathcal{R}(\mathbf{A})$ and $(y^k,d^k, x^k)$ be three sequences such that $d^k = \mathbf{A}^\top y^k$ and  $x^k = \nabla f^*(d^k)$. Then,  it holds  
\begin{align}
D_f^{d^k}(x^k,\hat x) = \Psi(y^k)- \hat \Psi.
\end{align}
\end{lemma}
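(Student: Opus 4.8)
The plan is to rewrite the Bregman distance through the Fenchel conjugate and then recognize the resulting expression as the dual suboptimality gap. I would start from the conjugate form of the Bregman distance recorded earlier in the excerpt, namely $D_f^{x^*}(x,y) = f^*(x^*) - \scp{x^*}{y} + f(y)$, which is valid whenever $x^* \in \partial f(x)$. The first thing to verify is that this identity applies with the choices $x^* = d^k$, $x = x^k$, $y = \hat x$. Since $f$ is $1$-strongly convex, $f^*$ is differentiable, and the relation $x^k = \nabla f^*(d^k)$ is equivalent (by Fenchel's equality) to $d^k \in \partial f(x^k)$. Hence the object $D_f^{d^k}(x^k,\hat x)$ is well-defined and equals $f^*(d^k) - \scp{d^k}{\hat x} + f(\hat x)$.

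Next I would simplify the inner product using the structure of $d^k$ together with primal feasibility of $\hat x$. Writing $d^k = \mathbf{A}^\top y^k$ and using $\mathbf{A}\hat x = b$ (which holds because $b \in \mathcal{R}(\mathbf{A})$ and $\hat x$ is the primal solution), I obtain $\scp{d^k}{\hat x} = \scp{\mathbf{A}^\top y^k}{\hat x} = \scp{y^k}{\mathbf{A}\hat x} = b^\top y^k$. Substituting this back yields $D_f^{d^k}(x^k,\hat x) = f^*(\mathbf{A}^\top y^k) - b^\top y^k + f(\hat x)$, whose first two terms are exactly $\Psi(y^k)$ by the definition of the dual objective in~\eqref{eq:DP}. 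Thus $D_f^{d^k}(x^k,\hat x) = \Psi(y^k) + f(\hat x)$.

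It then remains only to identify $f(\hat x)$ with $-\hat\Psi$. Here I would invoke the strong duality already established in the excerpt through the vanishing of the duality gap: since $\hat\Psi = \min_{y} \Psi(y) = -\max_{y}\big(b^\top y - f^*(\mathbf{A}^\top y)\big) = -f(\hat x)$, we get $f(\hat x) = -\hat\Psi$. Combining this with the previous display gives $D_f^{d^k}(x^k,\hat x) = \Psi(y^k) - \hat\Psi$, which is the claimed identity.

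I expect the only delicate points to be bookkeeping rather than conceptual: confirming that the subgradient membership $d^k \in \partial f(x^k)$ (so that the Bregman distance taken at the subgradient $d^k$ is meaningful) follows from $x^k = \nabla f^*(d^k)$, and keeping the sign convention straight so that indeed $\hat\Psi = -f(\hat x)$. Both follow immediately from results already stated in the excerpt (Fenchel conjugacy and the zero duality gap), so no additional machinery is required.
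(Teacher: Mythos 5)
Your proof is correct and follows essentially the same route as the paper's: both rewrite the Bregman distance via Fenchel's equality as $f^*(d^k) - \scp{d^k}{\hat x} + f(\hat x)$ (justified by subgradient inversion, i.e.\ $x^k = \nabla f^*(d^k) \Leftrightarrow d^k \in \partial f(x^k)$), use $\mathbf{A}\hat x = b$ to turn the inner product into $b^\top y^k$, and then invoke strong duality to identify $f(\hat x) = -\hat \Psi$. No gaps; the sign bookkeeping you flag is handled exactly as in the paper.
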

\begin{proof}
By Definition~\ref{def:D} we have that for $d^k = \mathbf{A}^\top y^k$ and $x^k = \nabla f^*(d^k)$. The subgradient inversion formula~\cite{Roc:70} implies that $d^k \in \partial f(x^k)$ and the following is valid:
\begin{align*}
D_f^{d^k}(x^k,\hat x) &= f^*(d^k) + f(\hat x) - \langle d^k,\hat x\rangle \\
&= f^*(\mathbf{A}^{\top}y^k) - \langle b,y^k\rangle  + f(\hat x) \\
&= \Psi(y^k) + \hat f,
\end{align*}
and since  $\hat \Psi = -\max -\Psi = -\hat f$ (by strong duality), the assertion follows.
\end{proof}

\medskip

\noindent The randomized block coordinate descent update applied to minimize  $\Psi$ from \eqref{eq:DP} reads, see  \cite{necoara2016,nesterov2012efficiency}:
\begin{align}
    y^{k+1} &= y^k - \frac{1}{\|\mathbf{A}_{(i)}\|_2^2}  \mathbf{U}_{i} \bigg( \mathbf{A}_{(i)}\nabla f^*(\mathbf{A}^\top y^k) - b_{(i)}\bigg)
    , \label{eq:bcd}
\end{align}
where $i = i_k \in [M]$ is a block row index chosen randomly. For the sampling probabilities of the blocks of rows we will assume the following model which depends on a parameter $\alpha \in [0, 1]$:
 \begin{equation}
   \label{eq:proba}
   p_{\alpha, M}(i) \eqdef p_i = L_i^{\alpha} \cdot \bigg[\sum\limits_{j=1}^M L_j^{\alpha}\bigg]^{-1},
 \end{equation}
 with $L_{i}$ from~\eqref{eq:Li}.
 The case $\alpha=0$ corresponds to uniform probabilities, and the case $\alpha=1$ corresponds to the case where the $i$-th block is sampled with probability proportional to the block Lipschitz constant. Multiplying both sides in~\eqref{eq:bcd} by $\mathbf{A}^\top$, we have
\[
\mathbf{A}^\top y^{k+1} = \mathbf{A}^\top y^{k} - \frac{1}{\|\mathbf{A}_{(i)}\|_2^2}  \mathbf{A}^\top\mathbf{U}_{i} \bigg( \mathbf{A}_{(i)} x^k  - b_{(i)}\bigg).
\]
Using the notations
\begin{align}
\label{eq:primal_iterate}
    d^k = \mathbf{A}^\top y^k, \quad 
    x^k =  \nabla f^*(d^k),
\end{align}
and the fact that $\mathbf{A}_{(i)} = \mathbf{U}_i^{\top} \mathbf{A}$ the block dual coordinate descent iterates transfer in the primal space to block Bregman-Kaczmarz iterates as shown in Algorithm~\ref{alg:BK}. Note that in Algorithm~\ref{alg:BK}, setting $f(x) = \tfrac{1}{2}\|x\|^2_2$ gives us the block randomized Kaczmarz (RK) method \cite{gower2019adaptive,necoara2019faster,SV09}, while $f(x) = \lambda\|x\|_1 + \tfrac{1}{2}\|x\|^2_2$ gives us a block variant of the randomized sparse Kaczmarz (RSK) method \cite{LWSM14,LS19}.
\begin{algorithm}[ht]
  \caption{Block Bregman-Kaczmarz method (BK)}
  \label{alg:BK}
  \begin{algorithmic}[1]
    \STATE {choose  $d^0 =0 \in \RR^n$ and set $x^0 = \nabla f^*(d^{0})$. Choose the number of block $M$} 
    \STATE{\textbf{Output:} (approximate) solution of $\min\limits_{x\in \RR^n} f(x) \quad \text{s.t.} \quad  \mathbf{A}x=b$.}
    \STATE initialize $k=0$
    \REPEAT
    \STATE choose a block row index $i = i_k \in [M]$  randomly
    \STATE update $d^{k+1} = d^k - \frac{1}{\|\mathbf{A}_{(i)}\|_2^2}  \mathbf{A}^{\top}_{(i)} \bigg(\mathbf{A}_{(i)}x^k- b_{(i)}\bigg)$ 
    \STATE update $x^{k+1} = \nabla f^*(d^{k+1})$
    \STATE increment $k =  k+1$
    \UNTIL{a stopping criterion is satisfied}
    \RETURN $x^{k+1}$
  \end{algorithmic}
\end{algorithm}
A more general version of randomized coordinate descent, namely APPROX has been proposed in~\cite{fercoq2015accelerated} to accelerate proximal coordinate descent methods for the minimization of composite functions. The APPROX update applied to solve our dual problem~\eqref{eq:DP} is given by:
\begin{equation}
\label{eq:approx}
\begin{aligned}
    v^k &= (1-\theta_k)y^k + \theta_k z^k \\
    z^{k+1}_{(i)} &= z^k_{(i)} - \frac{1}{\theta_k M \|\mathbf{A}_{(i)}\|_2^2}  \cdot \bigg( \mathbf{A}_{(i)} \nabla f^*(\mathbf{A}^\top v^{k})  - b_{(i)} \bigg)  \\
    y^{k+1} &= v^k + M \theta_k  (z^{k+1} - z^k) \\
    \theta_{k+1} &= \frac{\sqrt{\theta_k^4 + 4\theta_k^2} - \theta_k^2}{2}.
    \end{aligned}
\end{equation}
In the updates~\eqref{eq:approx}, $M$ and $\theta_k$ represent respectively the number of blocks and a sequence used for interpolation. The number of blocks is fixed by the user and the sequence $\theta_k$ is initialised with $\theta_0 = \frac{1}{M}$.  The second equation in~\eqref{eq:approx} can be translated to  full vector operation as $$z^{k+1} = z^k - \frac{1}{\theta_k M \|\mathbf{A}_{(i)}\|_2^2} \mathbf{U}_i  \cdot \bigg( \mathbf{A}_{(i)} \nabla f^*(\mathbf{A}^\top v^{k})  - b_{(i)} \bigg). $$
{We transfer this algorithm to the primal space using the relation $x^{k} = \nabla f^*(\mathbf{A}^{\top}y^{k})$ and this results in a primal block accelerated randomized Bregman Kaczmarz scheme, see Algorithm~\ref{alg:ARBK}.}

\begin{algorithm}[ht]
\vspace{0.1cm}
  \caption{ Block Accelerated Randomized Bregman Kaczmarz method (ARBK)}
  \label{alg:ARBK}
  \begin{algorithmic}[1]
    \STATE{Choose  $z^0 = y^0 = 0 \in \RR^m$. Choose the number of block $M$ and set  $\theta_0 = \frac{1}{M}$, $b\in \RR^m$, $\mathbf{A} \in \RR^{m\times n}$.} 
   \STATE{\textbf{Output:} (approximate) solution $x^{k+1} = \nabla f^*(\mathbf{A}^{\top} y^{k+1})$ of $\min\limits_{x\in \RR^n:\mathbf{A}x=b} f(x)$}
    \STATE initialize $k=0$
    \REPEAT
        \STATE update $v^k = (1 - \theta_k)y^k + \theta_k z^k    $
        \STATE choose a block row index $i = i_k \in [M]$ randomly
    \STATE update $z^{k+1} = z^k - \frac{1}{M\theta_k} \frac{\mathbf{U}_i \cdot \big( \mathbf{A}_{(i)} \nabla f^*(\mathbf{A}^{\top}v^{k})  - b_{(i)} \big)}{\|\mathbf{A}_{(i)}\|_2^2}$
    \STATE update $y^{k+1} = v^{k} + M \theta_k  (z^{k+1} - z^k)$
    \STATE update $\theta_{k+1} = \dfrac{\sqrt{\theta_k^4 + 4\theta_k^2} - \theta_k^2}{2}$
    \STATE increment $k =  k+1$
    \UNTIL{a stopping criterion is satisfied}
    \RETURN $y^{k+1}, x^{k+1} = \nabla f^*(\mathbf{A}^{\top} y^{k+1})$
  \end{algorithmic}
\end{algorithm}

\begin{remark}
We have written the APPROX~\eqref{eq:approx} and ARBK (Algorithm~\ref{alg:ARBK}) in a unified framework to emphasize their similarities. However, the two methods are used for two different purposes. The APPROX method~\eqref{eq:approx} outputs $y^k$, an approximate solution of the dual problem~\eqref{eq:DP}, whereas Algorithm~\ref{alg:ARBK} returns $x^k$, an approximate solution of our primal problem~\eqref{eq:PB}.
\end{remark}
\noindent We  also use the following relation of the sequences, that is, the iterates of APPROX~\eqref{eq:approx} and Algorithm~\ref{alg:ARBK} satisfy for all $k \geq 1$,
\begin{equation*}
    d^{k} = \mathbf{A}^\top y^k, \quad x^k = \nabla f^*(d^k).
\end{equation*}
Note that Algorithm~\ref{alg:BK} can be recovered from Algorithm~\ref{alg:ARBK} by setting
  $\theta_k = \theta_{0}$ for all $k \geq 0$. 

\begin{remark}
    Algorithm~\ref{alg:ARBK} is written in such a way that it would not only be easier for the reader to understand the restarted version, namely Algorithm~\ref{alg:RARBK1} in Section~\ref{sec:convergence}, but also for the convergence analysis. The iterate $y^k$ denotes the dual variables for problem~\ref{eq:DP}. In step 7 in Algorithm~\ref{alg:ARBK}, the iterate $z^{k+1}$ is obtained not using coordinate descent at the current iterate $z^k$, but rather at an extrapolated iterated between $y^k$ and $z^k$, namely $v^k$. The iterate $v^k$ is an additional variable moving from $z^k$ to $y^k$. The sequence $\theta_k$ is a non-increasing positive sequence converging to zero used for interpolation. We would like to specify that it is also possible to fully execute the ARBK method only in the primal space. In this case, the following sequences $z^k, y^k$ and $v^k$ in $\RR^m$ will be replaced by iterates $t^k, d^k$ and $c^k$ in $\RR^n$ respectively:
    \begin{itemize}
        \item In line 1, the initialisation will be replaced by $t^0 = d^0 = 0 \in \RR^n$.
        \item In line 2, the method will return an approximate solution $x^{k+1} = \nabla f^*(d^{k+1})$.
        \item In line 5, update $c^k = (1-\theta_k)d^k + \theta_k t^k$
        \item In line 7, update $t^{k+1} = t^k - \frac{1}{M\theta_k} \frac{\mathbf{U}_i \cdot \big( \mathbf{A}_{(i)} \nabla f^*(c^{k})  - b_{(i)} \big)}{\|\mathbf{A}_{(i)}\|_2^2}$
    \item In line 8, update $d^{k+1} = c^{k} + M \theta_k  (t^{k+1} - t^k)$
    \item In line 12, return $x^{k+1} = \nabla f^*(d^{k+1})$.
    \end{itemize}
With this primal version, we only compute the iterate $x^k$ at the end of the iterations. In Algorithm~\ref{alg:ARBK} we do not need to compute $x^0$ because we only need the iterate $y^0$. 
\end{remark}

\section{Convergence Analysis}
\label{sec:convergence}
In this section we first review a basic convergence result for APPROX from~\eqref{eq:approx} that is valid in the dual space, and which will be used later to
build convergence results for the proposed method Algorithm~\ref{alg:ARBK} in the primal space. We first recall the following properties on the sequence $\{\theta_k\}_{k \ge 0}$.
\begin{lemma}[{\cite[Note 2]{tseng2008accelerated}}]
\label{lm:sequence}
The sequence $\theta_k$ defined by $\theta_0 \leq 1$ and  $\theta_{k+1} = \frac{\sqrt{\theta_k^4 + 4\theta_k^2} - \theta_k^2}{2}$ satisfies
\begin{equation}
\begin{aligned}
   \frac{(2-\theta_0)}{k + (2-\theta_0)/\theta_0} &\leq \theta_k \leq \frac{2}{k + 2/\theta_0}, \\ 
   \frac{1-\theta_{k+1}}{\theta^2_{k+1}} &= \frac{1}{\theta^2_k}, \quad \forall k=0,1,\dots \\  
   \theta_{k+1} &\leq \theta_k, \quad \forall k=0,1,\dots
\end{aligned}
\end{equation}
\end{lemma}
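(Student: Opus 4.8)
The plan is to handle the three assertions in the order \emph{second identity}, \emph{monotonicity}, \emph{two-sided bound}, since the last two both rest on the quadratic relation hidden in the recursion. First I would observe that the update $\theta_{k+1} = \tfrac{\sqrt{\theta_k^4+4\theta_k^2}-\theta_k^2}{2}$ is exactly the positive root of the quadratic $t^2 + \theta_k^2\, t - \theta_k^2 = 0$ in the unknown $t=\theta_{k+1}$. Hence $\theta_{k+1}^2 + \theta_k^2\,\theta_{k+1} - \theta_k^2 = 0$, which rearranges directly into $\theta_{k+1}^2 = (1-\theta_{k+1})\,\theta_k^2$, i.e. $\tfrac{1-\theta_{k+1}}{\theta_{k+1}^2} = \tfrac{1}{\theta_k^2}$; this is the second identity. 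A short induction using $0<\theta_0\le 1$ (the positive root stays positive and bounded by $1$) shows $\theta_k\in(0,1]$ for all $k$, which legitimizes every subsequent manipulation.

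For the monotonicity $\theta_{k+1}\le\theta_k$ I would just reuse the relation $\theta_{k+1}^2 = (1-\theta_{k+1})\,\theta_k^2$: since $\theta_{k+1}>0$ forces $1-\theta_{k+1}>0$, we obtain $\theta_{k+1}^2 = (1-\theta_{k+1})\,\theta_k^2 \le \theta_k^2$, and positivity yields $\theta_{k+1}\le\theta_k$.

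The two-sided bound is the substantive part, and I would prove it through the reciprocal variable $u_k := 1/\theta_k$. The identity $\theta_{k+1}^2 = (1-\theta_{k+1})\,\theta_k^2$ becomes $u_{k+1}^2 - u_{k+1} = u_k^2$, which factors as $u_{k+1}-u_k = \tfrac{u_{k+1}}{u_{k+1}+u_k}$. Because $u$ is strictly increasing (by monotonicity) and positive, the right-hand side lies in $(\tfrac12,1)$; in particular $u_{k+1}-u_k>\tfrac12$ telescopes to $u_k \ge u_0 + \tfrac{k}{2} = \tfrac{1}{\theta_0}+\tfrac{k}{2}$, and inverting gives the upper bound $\theta_k \le \tfrac{2}{k+2/\theta_0}$. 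The lower bound needs a sharper ceiling on the increment, namely $u_{k+1}-u_k \le \tfrac{1}{2-\theta_0}$. To get it I note that $\tfrac{u_{k+1}}{u_{k+1}+u_k} \le \tfrac{1}{2-\theta_0}$ is equivalent to $u_k/u_{k+1} = \theta_{k+1}/\theta_k \ge 1-\theta_0$, and since $\theta_{k+1}/\theta_k = \sqrt{1-\theta_{k+1}}$ by the identity, this reduces to $\theta_{k+1}\le \theta_0(2-\theta_0)$; the latter holds because $\theta_{k+1}\le\theta_0$ (monotonicity) and $\theta_0 \le \theta_0(2-\theta_0)$ (as $2-\theta_0\ge 1$). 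Telescoping $u_{k+1}-u_k\le\tfrac{1}{2-\theta_0}$ gives $u_k \le \tfrac{1}{\theta_0}+\tfrac{k}{2-\theta_0}$, and inversion produces precisely $\theta_k \ge \tfrac{2-\theta_0}{k+(2-\theta_0)/\theta_0}$.

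The main obstacle is isolating the refined increment estimate $u_{k+1}-u_k\le\tfrac{1}{2-\theta_0}$: the crude bound $u_{k+1}-u_k<1$ only yields the weaker lower estimate $\theta_k \ge 1/(k+1/\theta_0)$, so the key insight is that $\theta_{k+1}/\theta_k=\sqrt{1-\theta_{k+1}}$ can be bounded below uniformly by $1-\theta_0$ via monotonicity. Once that is in place, everything else is routine telescoping and inversion of positive inequalities.
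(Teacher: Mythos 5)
Your proof is correct. Note, however, that the paper itself contains no proof of this lemma: it is imported verbatim from Tseng's manuscript (cited as \cite[Note 2]{tseng2008accelerated}), so there is no in-paper argument to compare against, and your self-contained derivation is a genuine addition. Your route is the natural one for such sequences: recognizing $\theta_{k+1}$ as the positive root of $t^2+\theta_k^2 t-\theta_k^2=0$, hence $\theta_{k+1}^2=(1-\theta_{k+1})\theta_k^2$ (the stated identity, from which monotonicity is immediate), and then telescoping the reciprocals $u_k=1/\theta_k$ via $u_{k+1}^2-u_{k+1}=u_k^2$. You also correctly isolate the only non-routine point: the crude increment bounds $\tfrac12\le u_{k+1}-u_k\le 1$ yield the upper bound $\theta_k\le 2/(k+2/\theta_0)$ but only the weaker lower bound $\theta_k\ge 1/(k+1/\theta_0)$, whereas the stated lower bound $\theta_k\ge (2-\theta_0)/(k+(2-\theta_0)/\theta_0)$ needs the sharper estimate $u_{k+1}-u_k\le 1/(2-\theta_0)$, which you obtain from $\theta_{k+1}/\theta_k=\sqrt{1-\theta_{k+1}}\ge 1-\theta_0$, valid because $\theta_{k+1}\le\theta_0\le\theta_0(2-\theta_0)$ when $\theta_0\le 1$. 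All steps check out, including the positivity induction that keeps $\theta_k\in(0,1)$ for $k\ge 1$ and legitimizes the inversions; the only unstated hypothesis you rely on is $\theta_0>0$, which is harmless since the paper always takes $\theta_0=1/M$.
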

\medskip

\noindent The next lemma gives a recurrence relation for the dual sequences generated by Algorithm~\ref{alg:ARBK}.
\begin{lemma}[{\cite[see Eq. (52)]{fercoq2015accelerated}}]
\label{lm:fercoq}
Consider the linearly constrained optimization problem~\eqref{eq:PB} with $1$-strongly convex $f:\RR^n \to \RR$.  Let $y^k, z^k$ be the sequences generated by Algorithm~\ref{alg:ARBK}, $\theta_{0} = \frac{1}{M}$ and any solution $\hat y \in \mathcal{Y}^*$. Then, it holds:
\begin{align}
\frac{1}{\theta^2_{k-1}}\mathbb{E} [\Psi(y^{k}) - \hat \Psi] + \frac{1}{2\theta^2_0} \mathbb{E}[\|z^k - \hat y\|^2_{\mathbf{B}}]   \leq \frac{1-\theta_0}{\theta^2_0} (\Psi(y^{0}) - \hat \Psi) + \frac{1}{2\theta^2_0}\|y^0 - \hat y\|^2_{\mathbf{B}},
\end{align}
with ${\mathbf{B}} = \textbf{Diag}(\|\mathbf{A}_{(1)}\|_2^2, \|\mathbf{A}_{(2)}\|_2^2, \dots, \|\mathbf{A}_{(M)}\|_2^2)$.
\end{lemma}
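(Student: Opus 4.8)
The plan is to recognize that Algorithm~\ref{alg:ARBK} is nothing but the APPROX method~\eqref{eq:approx} of~\cite{fercoq2015accelerated} applied to minimize the dual objective $\Psi$ from~\eqref{eq:DP}, run with uniform block sampling and \emph{without} any nonsmooth (proximal) term. To make the cited estimate applicable I would first check that the data match their hypotheses: by~\eqref{eq:dual_grad} the dual function $\Psi$ is differentiable, and by~\eqref{eq:Li} its gradient has block-coordinate Lipschitz constants $L_i = \|\mathbf{A}_{(i)}\|_2^2$, which are exactly the diagonal entries of $\mathbf{B}$. The step sizes $\tfrac{1}{M\theta_k\|\mathbf{A}_{(i)}\|_2^2}$ and the momentum recursion for $\theta_k$ in Algorithm~\ref{alg:ARBK} coincide with the APPROX updates, so the only thing that remains is to transfer their convergence estimate.

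The heart of the argument is the one-step descent inequality
\begin{equation}
\label{eq:onestepplan}
\frac{1}{\theta_k^2}\mathbb{E}[\Psi(y^{k+1}) - \hat\Psi] + \frac{1}{2\theta_0^2}\mathbb{E}[\|z^{k+1} - \hat y\|_{\mathbf{B}}^2] \leq \frac{1-\theta_k}{\theta_k^2}\mathbb{E}[\Psi(y^{k}) - \hat\Psi] + \frac{1}{2\theta_0^2}\mathbb{E}[\|z^{k} - \hat y\|_{\mathbf{B}}^2],
\end{equation}
which is precisely Eq.~(52) in~\cite{fercoq2015accelerated} specialized to our setting. To establish~\eqref{eq:onestepplan} I would use the block-Lipschitz (expected separable overapproximation) bound
\[
\mathbb{E}_i[\Psi(v^k + \mathbf{U}_i h_{(i)})] \leq \Psi(v^k) + \frac{1}{M}\langle \nabla\Psi(v^k), h\rangle + \frac{1}{2M}\|h\|_{\mathbf{B}}^2,
\]
valid for uniform sampling together with the weighting $\mathbf{B}$, then insert the APPROX updates $z^{k+1} = z^k - \tfrac{1}{M\theta_k}\mathbf{U}_i(\mathbf{A}_{(i)}\nabla f^*(\mathbf{A}^\top v^k) - b_{(i)})/\|\mathbf{A}_{(i)}\|_2^2$ and $y^{k+1} = v^k + M\theta_k(z^{k+1}-z^k)$, and control the resulting cross terms by convexity of $\Psi$, exactly as in~\cite{fercoq2015accelerated}.

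Finally, I would telescope~\eqref{eq:onestepplan}. By the identity $\tfrac{1-\theta_{k+1}}{\theta_{k+1}^2} = \tfrac{1}{\theta_k^2}$ from Lemma~\ref{lm:sequence}, the coefficient $\tfrac{1-\theta_j}{\theta_j^2}$ on the right-hand side at step $j \geq 1$ equals $\tfrac{1}{\theta_{j-1}^2}$, so the potential $\tfrac{1}{\theta_{j-1}^2}\mathbb{E}[\Psi(y^{j}) - \hat\Psi] + \tfrac{1}{2\theta_0^2}\mathbb{E}[\|z^{j}-\hat y\|_{\mathbf{B}}^2]$ cancels between consecutive steps; summing~\eqref{eq:onestepplan} over $j = 0, \dots, k-1$ and using $z^0 = y^0$ yields the claim. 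I expect the main obstacle to be the verification of the per-step estimate~\eqref{eq:onestepplan}: reproducing the APPROX bookkeeping (the correct handling of the expectation with respect to $i_k$ conditioned on the past, and the cross-term cancellation) is delicate, which is why it is cleaner to check that the assumptions of~\cite{fercoq2015accelerated} are met and invoke their Eq.~(52) directly.
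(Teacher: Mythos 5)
Your proposal matches the paper's treatment: the paper offers no independent proof of this lemma, but justifies it exactly as you do, by observing that Algorithm~\ref{alg:ARBK} is the APPROX method~\eqref{eq:approx} applied to the dual objective $\Psi$ (uniform sampling, no proximal term, block-Lipschitz constants $L_i = \|\mathbf{A}_{(i)}\|_2^2$ giving the weights in $\mathbf{B}$) and invoking Eq.~(52) of~\cite{fercoq2015accelerated}. Your additional sketch of the underlying one-step estimate and its telescoping via $\tfrac{1-\theta_{k+1}}{\theta_{k+1}^2} = \tfrac{1}{\theta_k^2}$ and $z^0 = y^0$ is consistent with that reference and correct.
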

For our proposed methods we are interested in showing convergence results for the primal iterates $x^k$ given in~\eqref{eq:primal_iterate} and not for the dual iterates $y^k$. By showing that the rewriting of the iterations of the APPROX method in the primal space yields ARBK method, and using recent convergence results on coordinate descent methods for problems with non-strongly convex objective functions \cite{fercoq2015accelerated,necoara2016,10178390} and Lemma~\ref{lm:IterateNormSuboptDual}, we obtain the following convergence results for ARBK (and consequently also for BK) as a byproduct. We denote  (similarly to~\cite{nesterov2012efficiency}) the size of the initial level set in the standard Euclidean norm by
\begin{equation}
    R_{\alpha}(y^0) = \text{max}_{y} \big\{\text{min}_{\hat y  \in \mathcal{Y}^{*}} \|y - \hat y\|_{\alpha}: \Psi(y) \leq \Psi(y^{0})\big\},
\end{equation}
where
\begin{equation}
    \|y\|_{\alpha}^2 \eqdef \sum\limits_{i=1}^{M} L_i^{\alpha} \|y_{(i)}\|^2_2, \quad \quad \alpha \in [0,1]
\end{equation}
The previous constant $R_{\alpha}(y^0)$, which measures the size of the initial level set,  appears in our convergence rate and it is common in the coordinate descent literature. 
Moreover, we also denote
\begin{equation}
\bar{L}_{\alpha} \eqdef \frac{1}{M} \sum\limits_{j=1}^M L_j^{\alpha}\  \text{ and }\ \bar L \eqdef \bar{L}_{1}\label{eq:barL}
\end{equation}
\subsection{Sublinear convergence results}
In this section, we are going to show sublinear convergence results for both Algorithm~\ref{alg:BK} and~\ref{alg:ARBK}.

\begin{theorem}
\label{thm:1}
Consider the linearly constrained optimization problem~\eqref{eq:PB} with $1$-strongly convex $f:\RR^n \to \RR$.  Let $x^k, d^k$ be the sequences generated by BK i.e. Algorithm~\ref{alg:BK}. At every iteration $k$ of the BK method, a block index $i = i_k  \in [M]$ is sampled with probability $p_{\alpha, M}(i)$ from Eq.~\eqref{eq:proba} and $\alpha\in [0,1]$.
Then, it holds that
\begin{align}
\label{eq:sub_conv}
\frac{1}{2}\mathbb{E}\big[\|x^k -\hat{x}\|^2_2\big] \leq\mathbb{E}\big[D_f^{d^k}(x^k,\hat{x})\big] \leq \frac{2M\bar{L}_{\alpha}}{k+4}  R^{2}_{1-\alpha}(y^0).
\end{align}
\end{theorem}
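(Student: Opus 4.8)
The plan is to prove the two inequalities in~\eqref{eq:sub_conv} separately, the left one being an immediate consequence of strong convexity and the right one being the substantive estimate. For the left inequality I would simply invoke~\eqref{eq:D} with $\sigma = 1$ (recall $f$ is normalized to be $1$-strongly convex), which gives $\tfrac{1}{2}\|x^k - \hat x\|_2^2 \le D_f^{d^k}(x^k,\hat x)$ pointwise; taking expectations preserves the inequality. It then remains to bound the middle quantity $\mathbb{E}[D_f^{d^k}(x^k,\hat x)]$ by the claimed right-hand side.

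The key observation is that the BK iterates live in the primal space but are the exact image, under $d \mapsto \nabla f^*(d)$, of a dual sequence generated by plain (non-accelerated) randomized block coordinate descent applied to $\Psi$. Concretely, by the derivation leading from~\eqref{eq:bcd} to Algorithm~\ref{alg:BK} via the substitution~\eqref{eq:primal_iterate}, there is an implicit dual sequence $y^k$ obeying the coordinate-descent recursion~\eqref{eq:bcd} with $d^k = \mathbf{A}^\top y^k$ and $x^k = \nabla f^*(d^k)$. Since $b \in \mathcal{R}(\mathbf{A})$ by consistency, Lemma~\ref{lm:IterateNormSuboptDual} applies and yields the exact identity $D_f^{d^k}(x^k,\hat x) = \Psi(y^k) - \hat\Psi$. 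Thus the primal Bregman gap equals the dual suboptimality, and the task reduces to bounding $\mathbb{E}[\Psi(y^k) - \hat\Psi]$.

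To estimate the dual suboptimality I would invoke the standard sublinear convergence theory for randomized block coordinate descent on convex functions~\cite{fercoq2015accelerated,necoara2016,nesterov2012efficiency}. The dual objective $\Psi$ is convex and differentiable with block-coordinate Lipschitz gradient constants $L_i = \|\mathbf{A}_{(i)}\|_2^2$ from~\eqref{eq:Li}, the block is sampled with the Lipschitz-dependent probabilities $p_{\alpha,M}$ from~\eqref{eq:proba}, and the optimal set $\mathcal{Y}^*$ is nonempty (so that $R_{1-\alpha}(y^0)$ is finite and well defined). Under exactly these hypotheses the Nesterov-type estimate gives
\[
\mathbb{E}[\Psi(y^k) - \hat\Psi] \le \frac{2}{k+4}\Big(\sum_{j=1}^M L_j^\alpha\Big)\, R_{1-\alpha}^2(y^0).
\]
Recalling $\sum_{j=1}^M L_j^\alpha = M\bar L_\alpha$ by~\eqref{eq:barL}, the right-hand side becomes $\tfrac{2M\bar L_\alpha}{k+4}\, R_{1-\alpha}^2(y^0)$, which together with the identity from Lemma~\ref{lm:IterateNormSuboptDual} and the left inequality completes the chain~\eqref{eq:sub_conv}.

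The main obstacle, and essentially the only place requiring genuine care, is matching the sampling model and the constants: one has to verify that the probability rule $p_{\alpha,M}$ and the weighted norm $\|\cdot\|_{1-\alpha}$ entering $R_{1-\alpha}(y^0)$ are precisely the pair for which the cited coordinate-descent rate carries the factor $\sum_j L_j^\alpha$ (rather than, say, $\max_j L_j$ or $\sum_j L_j^{2\alpha}$), and that the block version of the theorem applies to the dual problem with all its regularity assumptions met. The transfer from the dual gap to the primal Bregman distance is then purely the content of Lemma~\ref{lm:IterateNormSuboptDual} and needs no further argument.
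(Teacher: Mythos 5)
Your proposal matches the paper's own proof essentially step for step: the dual block coordinate descent interpretation of BK via~\eqref{eq:primal_iterate}, the citation of the Nesterov-type rate $\mathbb{E}[\Psi(y^k)-\hat\Psi] \le \tfrac{2}{k+4}\big(\sum_j L_j^\alpha\big)R^2_{1-\alpha}(y^0)$ for the dual iterates, the transfer to the primal Bregman gap via Lemma~\ref{lm:IterateNormSuboptDual}, and the lower bound from $1$-strong convexity through~\eqref{eq:D}. Your added remarks on verifying that the sampling rule~\eqref{eq:proba} and the norm $\|\cdot\|_{1-\alpha}$ are the matching pair in the cited result are a sensible precaution, but they do not change the argument, which is the same as the paper's.
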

\begin{proof}
    Let $y^k$ be the sequence generated by iteration~\eqref{eq:bcd}. It holds from~\cite[Theorem 1] {nesterov2012efficiency}:
    \begin{equation*}
        \mathbb{E}\bigg[\Psi(y^k) - \hat \Psi\bigg] \leq \frac{2}{k+4} \cdot \bigg[\sum\limits_{j=1}^M L_j^{\alpha}\bigg] \cdot R^{2}_{1-\alpha}(y^0)  \quad \forall k \geq 0. 
    \end{equation*}
Furthermore, from Lemma~\ref{lm:IterateNormSuboptDual}, it holds $D_f^{d^k}(x^k,\hat x) = \Psi(y^k)- \hat \Psi$ which give us the upper bound part. The lower bound part follows from the fact that $f$ is $1$-strongly convex, hence satisfying Eq.~\eqref{eq:D}.
\end{proof}

\begin{remark}
    For the two extremal variants of the choice of probabilities in~\eqref{eq:proba} we get in~\eqref{eq:sub_conv} the following results:
    \begin{enumerate}
    \item Uniform probabilities (i.e., $\alpha = 0$):
    \begin{align*}
        \mathbb{E}\big[\|x^k -\hat{x}\|^2_2\big] \leq \frac{4M}{k+4}  R^{2}_{1}(y^0).
    \end{align*}
    \item Probabilities proportional to $L_i$ (i.e., $\alpha = 1$):
    \begin{align*}
         \mathbb{E}\big[\|x^k -\hat{x}\|^2_2\big]  \leq \frac{4M\bar L}{k+4} R^{2}_{0}(y^0).
    \end{align*}
\end{enumerate}
\end{remark}
The next theorem shows convergence results for our accelerated method,  Algorithm~\ref{alg:ARBK}.
\begin{theorem}
\label{thm:2}
Consider the linearly constrained optimization problem~\eqref{eq:PB} with $1$-strongly convex $f:\RR^n \to \RR$.  Let $x^k, d^k$ be the sequences generated by ARBK (Algorithm~\ref{alg:ARBK}) with uniform probabilities, $\theta_0 = \tfrac{1}{M}$ and any $\hat y \in \mathcal{Y}^*$. Then, it holds:

\begin{align}
\frac{1}{2}\mathbb{E}\big[\|x^k -\hat{x}\|^2_2\big] \leq \mathbb{E} \big[D_f^{d^k}(x^k,\hat{x})\big] \leq  \frac{4M^2}{(k-1+2M)^2} C_0,
\end{align}
where
\[C_0 = \bigg(1 - \frac{1}{M}\bigg) D_f^{d^0}(x^0,\hat{x}) + \frac{1}{2}\|y^0 - \hat y\|^2_{\mathbf{B}}\]
with ${\mathbf{B}} = \textbf{Diag}(\|\mathbf{A}_{(1)}\|_2^2, \|\mathbf{A}_{(2)}\|_2^2, \dots, \|\mathbf{A}_{(M)}\|_2^2)$.
\end{theorem}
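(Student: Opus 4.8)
The plan is to prove the two inequalities separately, using the strong convexity of $f$ for the left-hand estimate and the dual analysis of APPROX for the right-hand estimate; the bridge between the primal Bregman distance and the dual suboptimality gap is Lemma~\ref{lm:IterateNormSuboptDual}. For the lower bound, I would note that since $x^k = \nabla f^*(d^k)$, the subgradient inversion formula gives $d^k \in \partial f(x^k)$, so inequality~\eqref{eq:D} applies with $x = x^k$, $y = \hat x$, $x^* = d^k$ and $\sigma = 1$, yielding $\tfrac{1}{2}\|x^k - \hat x\|_2^2 \leq D_f^{d^k}(x^k, \hat x)$ pointwise; taking expectations preserves it.

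For the upper bound, the first step is to rewrite the primal quantity in dual terms: by Lemma~\ref{lm:IterateNormSuboptDual} we have $\mathbb{E}[D_f^{d^k}(x^k, \hat x)] = \mathbb{E}[\Psi(y^k) - \hat\Psi]$, and in particular (taking $k=0$) $\Psi(y^0) - \hat\Psi = D_f^{d^0}(x^0, \hat x)$. Next I would invoke Lemma~\ref{lm:fercoq} and discard the nonnegative term $\tfrac{1}{2\theta_0^2}\mathbb{E}[\|z^k - \hat y\|^2_{\mathbf{B}}]$ to obtain
\begin{equation*}
\mathbb{E}[\Psi(y^k) - \hat\Psi] \leq \theta_{k-1}^2\left[\frac{1-\theta_0}{\theta_0^2}\big(\Psi(y^0) - \hat\Psi\big) + \frac{1}{2\theta_0^2}\|y^0 - \hat y\|^2_{\mathbf{B}}\right].
\end{equation*}
Substituting $\theta_0 = \tfrac{1}{M}$ gives $\tfrac{1-\theta_0}{\theta_0^2} = M^2 - M$ and $\tfrac{1}{\theta_0^2} = M^2$, so the bracket equals $M^2\big[(1 - \tfrac{1}{M}) D_f^{d^0}(x^0, \hat x) + \tfrac{1}{2}\|y^0 - \hat y\|^2_{\mathbf{B}}\big] = M^2 C_0$.

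Finally, I would control $\theta_{k-1}^2$ with the upper estimate of Lemma~\ref{lm:sequence}: with $\theta_0 = \tfrac{1}{M}$ we have $2/\theta_0 = 2M$, hence $\theta_{k-1} \leq \tfrac{2}{(k-1) + 2M}$ and $\theta_{k-1}^2 \leq \tfrac{4}{(k-1+2M)^2}$. Combining the three estimates gives $\mathbb{E}[\Psi(y^k) - \hat\Psi] \leq \tfrac{4M^2}{(k-1+2M)^2} C_0$, and one last application of Lemma~\ref{lm:IterateNormSuboptDual} converts this back into the claimed bound on $\mathbb{E}[D_f^{d^k}(x^k, \hat x)]$, which together with the lower bound completes the chain.

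The proof is essentially a sequence of substitutions, so no single step is genuinely deep; the part requiring the most care is the bookkeeping with $\theta_0 = \tfrac{1}{M}$ — verifying that $\tfrac{1-\theta_0}{\theta_0^2}$ and $\tfrac{1}{2\theta_0^2}$ combine \emph{precisely} into the stated constant $C_0$ (rather than being off by a factor of $M$), and that the index shift to $\theta_{k-1}$ in Lemma~\ref{lm:fercoq} aligns with the denominator $(k-1+2M)^2$ appearing in the claim. I would also double-check that the hypotheses of Lemma~\ref{lm:fercoq} (the uniform sampling and $\theta_0 = \tfrac1M$ specialization) are exactly those assumed in the theorem so that the cited recurrence may be used verbatim.
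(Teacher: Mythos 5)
Your proposal is correct and follows essentially the same route as the paper's own proof: both combine Lemma~\ref{lm:fercoq} (dropping the nonnegative $\|z^k-\hat y\|^2_{\mathbf{B}}$ term) with Lemma~\ref{lm:IterateNormSuboptDual} to pass between dual suboptimality and the Bregman distance, bound $\theta_{k-1}^2/\theta_0^2 \leq 4M^2/(k-1+2M)^2$ via Lemma~\ref{lm:sequence}, and obtain the lower bound from strong convexity, Eq.~\eqref{eq:D}. The only difference is cosmetic: you phrase the intermediate steps in terms of $\Psi(y^k)-\hat\Psi$ and convert back at the end, while the paper writes the chain directly in the Bregman distance.
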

\medskip

\begin{proof}
From Lemma~\ref{lm:fercoq} and Lemma~\ref{lm:IterateNormSuboptDual}, it holds:
\begin{align*}
\frac{1}{\theta^2_{k-1}}\mathbb{E} \big[D_f^{d^k}(x^k,\hat{x})\big]   &\leq \frac{1-\theta_0}{\theta^2_0} D_f^{d^0}(x^0,\hat{x}) + \frac{1}{2\theta^2_0}\|y^0 - \hat y\|^2_{\mathbf{B}}, \\
\mathbb{E} \big[D_f^{d^k}(x^k,\hat{x})\big]   &\leq \frac{\theta^2_{k-1}}{\theta^2_0} \bigg( (1-\theta_0) D_f^{d^0}(x^0,\hat{x}) + \frac{1}{2}\|y^0 - \hat y\|^2_{\mathbf{B}} \bigg), \\
\mathbb{E} \big[D_f^{d^k}(x^k,\hat{x})\big]   &\leq \frac{4M^2}{(k-1+2M)^2} \bigg( (1-\theta_0) D_f^{d^0}(x^0,\hat{x}) + \frac{1}{2}\|y^0 - \hat y\|^2_{\mathbf{B}} \bigg),
\end{align*}
where we used the first inequality in Lemma~\ref{lm:sequence}, i.e., $\frac{\theta^2_{k-1}}{\theta^2_0} \leq \frac{4}{(k-1+2M)^2} \cdot \frac{1}{\frac{1}{M^2}} = \frac{4M^2}{(k-1+2M)^2}$. The first inequality in Theorem~\ref{thm:2} follows from Eq.~\eqref{eq:D}.
\end{proof}
\noindent  Theorem~\ref{thm:2} shows that the iterates $x^k$ of Algorithm~\ref{alg:ARBK} (ARBK) converge at the rate $\mathcal{O}(1/k^2)$, thus accelerating its standard counterpart, Algorithm~\ref{alg:BK} (BK), which has $\mathcal{O}(1/k)$ rate of convergence (cf. Theorem~\ref{thm:1}). To the best of our knowledge, accelerated Kaczmarz variants have not yet been proposed for problem \eqref{eq:PB} and the convergence guarantees for ARBK algorithm from Theorem~\ref{thm:2} are new.  Moreover, the convergence rates from both theorems (Theorem~\ref{thm:1} and Theorem~\ref{thm:2}) depend on the number of blocks, $M$, proving that it is beneficial to consider blocks of the matrix $\mathbf{A}$ in our previous algorithms. In fact, one can notice that using fewer blocks (i.e., small $M$) leads to better bounds on the convergence rate. However, small $M$ implies large block sizes which increases the computational time of the subproblem. From our computational results from Section~\ref{sec:numerics} one can see that small $M$ indeed yields less number of full iterations (epochs). Under some regularity assumptions on the objective function $f$, see Section~\ref{sec:basicnotions},  we can even show linear convergence for the BK and ARBK methods, respectively.

\subsection{Linear convergence results}
\label{sec:lin_conv}
Consider the feasible, convex and linearly constrained optimization problem~\eqref{eq:PB}. To obtain convergence rates for the solution algorithm, we will estimate the Bregman distance of the iterates to the solution $\hat x$ by error bounds of the form $D_f^{x^*}(x,\hat x) \leq \theta \cdot \|\mathbf{A}x - b\|^2_2$. We know that such error bounds always hold if $f$ has a Lipschitz-continuous gradient \cite{necoara2022linear}. But they also hold under weaker conditions. For example, the following inequality holds for problem~\eqref{eq:PB} with objective function defined by~\eqref{eq:spf}:
\begin{equation*}
    D_f^{x^*}(x,\hat x) \leq \theta(\hat x) \cdot \|\mathbf{A}x - b\|^2_2,
\end{equation*}
we refer the reader to~\cite{schopfer2022extended} for more details on the constant $\theta(\hat x)$.

\begin{example} \label{exmp:EBsparse}
Let $\hat{x}$ be the unique solution of~\eqref{eq:PB} with objective function $f(x) = \lambda \cdot \|x\|_{1} + \tfrac{1}{2} \cdot \|x\|_{2}^{2}$.
Then there exists $\gamma(\hat{x}) >0$ such that for all $x \in \RR^n$ and $x^* \in \partial f(x) \cap \Rcal(\mathbf{A}^{\top})$ we have
\[
D_f^{x^*}(x,\hat{x})\le \gamma(\hat{x}) \cdot \|\mathbf{A}x-b\|_{2}^2\,.
\]
Let $\mathbf{A}_J \neq 0$ denotes a submatrix that is formed by columns of $\mathbf{A}$ indexed by $J\subseteq [n]$ and let $\sigma^{+}_{\min}(\mathbf{A}_J)$ denote its smallest positive singular value. We set
\begin{equation*} \label{eq:sing_value}
    \tilde{\sigma}_{\min}(\mathbf{A}) \eqdef \min\{\sigma^{+}_{\min}(\mathbf{A}_J) \mid J\subseteq [n], \mathbf{A}_J\neq 0 \},
\end{equation*}
and for $x \neq 0$ we define $| x|_{\mathrm{min}} \eqdef \min\{|x_j| \mid x_j\neq 0\}$. 
Based on the results of~\cite{LY13}, an explicit expression of $\gamma(\hat{x})$ for $\hat{x} \not= 0$ was given in~\cite{LS19} as follows:
\begin{equation}
\label{eq:pl_constant}
\gamma(\hat{x})=\frac{1}{\tilde{\sigma}_{\min}^2(\mathbf{A})} \cdot \frac{|\hat{x}|_{\min} + 2 \lambda}{|\hat{x}|_{\min}} \,.
\end{equation}
\end{example}

\noindent To clarify the assumptions under which such error bounds hold for more general objective functions, notions such as calmness, linear regularity, and linear growth ~\cite{schopfer2022extended} have been defined in Section~\ref{sec:error_bound_condition}. The next result shows that the dual function satisfies some PL inequality.

\begin{theorem}[{\cite[Theorem 3.9]{schopfer2022extended}}]
\label{th:error_bounds_equality}
Consider the linearly constrained optimization problem~\eqref{eq:PB} with strongly convex objective function $f:\RR^n \to \RR$.  Assume that $f$ is calm at the unique solution $\hat x$ of \eqref{eq:PB}, that $\partial f$ grows at most linearly (cf. Def.~\ref{ass:linear_growth}, and Def.~\ref{ass:calmness}, respectively), and the collection $\{\partial f(\hat x), \mathcal{R}(\mathbf{A}^{\top})\}$ is linearly regular (cf. Def.~\ref{ass:linear_regularity}). Then, $\Psi$ satisfies the PL inequality~\eqref{eq:pl}, i.e., there exists $\gamma(\hat x) >0$ such that for all $x \in \RR^n$ and $d \in \partial f(x) \cap \mathcal{R}(\mathbf{A}^{\top})$ we have the global error bound:
\begin{equation}
    \label{eq:EB}
   D_f^{d}(x,\hat x) = \Psi(y)-\hat \Psi \leq \gamma(\hat x) \cdot \|\nabla \Psi(y)\|^2_2 = \gamma(\hat x) \cdot \|\mathbf{A}x-b\|_2^2.
\end{equation}
\end{theorem}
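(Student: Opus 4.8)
The plan is to prove the genuine inequality $D_f^{d}(x,\hat x)\le \gamma(\hat x)\,\|\mathbf{A}x-b\|_2^2$ and to dispatch the two flanking equalities in~\eqref{eq:EB} at the outset, since they are immediate. The left equality is Lemma~\ref{lm:IterateNormSuboptDual} applied to any $y$ with $d=\mathbf{A}^\top y$ (such $y$ exists because $d\in\mathcal{R}(\mathbf{A}^\top)$), and the right equality follows from the gradient formula~\eqref{eq:dual_grad} together with $x=\nabla f^*(d)=\nabla f^*(\mathbf{A}^\top y)$, which gives $\nabla\Psi(y)=\mathbf{A}x-b$. By linear regularity the set $\partial f(\hat x)\cap\mathcal{R}(\mathbf{A}^\top)$ is nonempty, and I fix an element $\hat d$ of it; recall $\mathbf{A}\hat x=b$ and $\hat x=\nabla f^*(\hat d)$ from~\eqref{eq:opt_sol}.

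The first key step is a pair of one-sided estimates valid for every admissible pair $(x,d)$ and every $\hat d\in\partial f(\hat x)\cap\mathcal{R}(\mathbf{A}^\top)$. Using $\hat d\in\partial f(\hat x)$ in the subgradient inequality and expanding Definition~\ref{def:D} gives the upper bound $D_f^{d}(x,\hat x)\le \langle d-\hat d,\,x-\hat x\rangle$, while adding the two strong-convexity inequalities (with $\sigma=1$) at $x$ and at $\hat x$ yields the strong-monotonicity lower bound $\langle d-\hat d,\,x-\hat x\rangle\ge \|x-\hat x\|_2^2$. The second key step exploits the constraint $d,\hat d\in\mathcal{R}(\mathbf{A}^\top)$: since $d-\hat d\perp\nullspace(\mathbf{A})$, I may replace $x-\hat x$ by its projection $p=\mathbf{A}^{+}(\mathbf{A}x-b)$ onto $\mathcal{R}(\mathbf{A}^\top)$ without changing the inner product, and $\|p\|_2\le \|\mathbf{A}^{+}\|_2\,\|\mathbf{A}x-b\|_2$. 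Combining these three facts gives the master inequality
\[
D_f^{d}(x,\hat x)\;\le\;\langle d-\hat d,\,x-\hat x\rangle\;\le\;\|\mathbf{A}^{+}\|_2\,\|d-\hat d\|_2\,\|\mathbf{A}x-b\|_2,
\]
so the whole problem reduces to a \emph{linear} dual residual bound $\|d-\hat d\|_2\le C(\hat x)\,\|\mathbf{A}x-b\|_2$.

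To produce such a $C$ I would split into a local and a global regime, which is where the three structural hypotheses enter. In the local regime $\|x-\hat x\|_2\le\varepsilon$, calmness gives $\dist(d,\partial f(\hat x))\le L\|x-\hat x\|_2$, and since $\dist(d,\mathcal{R}(\mathbf{A}^\top))=0$, linear regularity upgrades this to $\|d-\hat d\|_2\le \nu L\|x-\hat x\|_2$ when $\hat d$ is taken as the metric projection of $d$ onto $\partial f(\hat x)\cap\mathcal{R}(\mathbf{A}^\top)$. Inserting this into the lower bound and the master inequality yields $\|x-\hat x\|_2^2\le \nu L\,\|\mathbf{A}^{+}\|_2\,\|x-\hat x\|_2\,\|\mathbf{A}x-b\|_2$, hence $\|x-\hat x\|_2\le \nu L\|\mathbf{A}^{+}\|_2\|\mathbf{A}x-b\|_2$ and therefore $\|d-\hat d\|_2\le (\nu L)^2\|\mathbf{A}^{+}\|_2\|\mathbf{A}x-b\|_2$. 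In the global regime $\|x-\hat x\|_2>\varepsilon$, calmness is unavailable, so I use linear growth with a \emph{fixed} $\hat d$: $\|d-\hat d\|_2\le \rho_1\|x-\hat x\|_2 + c_0$ with $c_0=\rho_1\|\hat x\|_2+\rho_2+\|\hat d\|_2$, which I absorb into $\|d-\hat d\|_2\le \rho_1'\|x-\hat x\|_2$, $\rho_1'=\rho_1+c_0/\varepsilon$, using $\|x-\hat x\|_2>\varepsilon$. Combining again with the lower bound and master inequality bootstraps to $\|x-\hat x\|_2\le \rho_1'\|\mathbf{A}^{+}\|_2\|\mathbf{A}x-b\|_2$ and the analogous linear dual bound. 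Taking $\gamma(\hat x)$ to be the larger of $\bigl((\nu L)\|\mathbf{A}^{+}\|_2\bigr)^2$ and $\bigl(\rho_1'\|\mathbf{A}^{+}\|_2\bigr)^2$ finishes the proof.

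I expect the global regime to be the main obstacle. Far from $\hat x$ the calmness estimate is gone, and the only control on $\|d-\hat d\|_2$ is the \emph{linear} growth bound, which a priori is too weak to yield a quadratic error bound. The decisive manoeuvre is the bootstrap $\|x-\hat x\|_2^2\lesssim \|x-\hat x\|_2\,\|\mathbf{A}x-b\|_2$ coming from strong monotonicity combined with the projection step: it cancels one power of $\|x-\hat x\|_2$ and thereby shows that the residual dominates precisely because $\|x-\hat x\|_2$ is bounded below by $\varepsilon$. Getting the dependence on $\varepsilon$ and on the fixed element $\hat d$ correct, so that a single constant $\gamma(\hat x)$ works uniformly across both regimes, is the only delicate bookkeeping.
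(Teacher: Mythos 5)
The paper does not actually prove this statement: it imports it wholesale as \cite[Theorem 3.9]{schopfer2022extended}, so there is no internal proof to compare against. Judged on its own, your proof is correct and amounts to a self-contained reconstruction of the cited result, organized around the same three hypotheses. I checked the key chain: the upper bound $D_f^{d}(x,\hat x)\le\langle d-\hat d,\,x-\hat x\rangle$ (subgradient inequality at $\hat x$ plus the definition of the Bregman distance), the strong-monotonicity lower bound $\langle d-\hat d,\,x-\hat x\rangle\ge\|x-\hat x\|_2^2$, and the range argument $\langle d-\hat d,\,x-\hat x\rangle=\langle d-\hat d,\,\mathbf{A}^{+}(\mathbf{A}x-b)\rangle$ (valid since $d-\hat d\perp\nullspace(\mathbf{A})$ and $\mathbf{A}^{+}\mathbf{A}$ is the orthogonal projector onto $\mathcal{R}(\mathbf{A}^{\top})$) all hold for \emph{every} $\hat d\in\partial f(\hat x)\cap\mathcal{R}(\mathbf{A}^{\top})$, which is exactly what licenses your two different choices of $\hat d$: the metric projection of $d$ in the local regime (where calmness plus linear regularity give $\|d-\hat d\|_2\le\nu L\|x-\hat x\|_2$) and a fixed element in the global regime (where linear growth plus $\|x-\hat x\|_2>\varepsilon$ give $\|d-\hat d\|_2\le\rho_1'\|x-\hat x\|_2$). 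In both regimes the bootstrap
\begin{equation*}
\|x-\hat x\|_2^2\;\le\;\|d-\hat d\|_2\,\|\mathbf{A}^{+}\|_2\,\|\mathbf{A}x-b\|_2
\end{equation*}
cancels one power of $\|x-\hat x\|_2$ and closes the argument, yielding the explicit constant $\gamma(\hat x)=\max\bigl\{(\nu L\|\mathbf{A}^{+}\|_2)^2,\,(\rho_1'\|\mathbf{A}^{+}\|_2)^2\bigr\}$; the flanking equalities in~\eqref{eq:EB} are correctly dispatched via Lemma~\ref{lm:IterateNormSuboptDual} and $\nabla\Psi(y)=\mathbf{A}\nabla f^*(\mathbf{A}^{\top}y)-b$. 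This is a genuine added value relative to the paper, which gives no proof and no constant beyond the special case~\eqref{eq:pl_constant}. Two cosmetic corrections: nonemptiness of $\partial f(\hat x)\cap\mathcal{R}(\mathbf{A}^{\top})$ is not a \emph{consequence} of linear regularity but a standing hypothesis of Definition~\ref{ass:linear_regularity} (or, independently, follows from strong duality, since $\mathbf{A}^{\top}\hat y\in\partial f(\hat x)$ by~\eqref{eq:opt_sol}); and your monotonicity bound uses the paper's normalization $\sigma=1$, so for general $\sigma$ the constant picks up the expected factors of $1/\sigma$ (also, the division by $\|x-\hat x\|_2$ in the local regime should be accompanied by the trivial remark that the bound holds when $x=\hat x$).
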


\noindent The following theorem gives a convergence result for the iterates generated by Algorithm~\ref{alg:BK}.

\begin{theorem}
\label{th:linear_convergence_bk}
Consider the linearly constrained optimization problem~\eqref{eq:PB} with $1$-strongly convex $f:\RR^n \to \RR$. Assume that $f$ is calm at the unique solution $\hat x$ of \eqref{eq:PB} and that $\partial f$ grows at most linearly (cf. Def.~\ref{ass:linear_growth}, and Def.~\ref{ass:calmness}, respectively), and the collection $\{\partial f(\hat x), \mathcal{R}(\mathbf{A}^{\top})\}$ is linearly regular (cf. Def.~\ref{ass:linear_regularity}). Let $x^k$ be the sequence generated by BK (Algorithm~\ref{alg:BK}) with probability
 \begin{equation*}
     p_{\alpha, M}(i) = p_i = L_i^{\alpha} \cdot \bigg[\sum\limits_{j=1}^M L_j^{\alpha}\bigg]^{-1}, \;\; \text{where} \;\; \alpha \in [0, 1].
 \end{equation*}
 Then, it holds
 \begin{align}
\label{eq:bd_conv}
   \mathbb{E}[D_f^{d^{k+1}}(x^{k+1},\hat{x})] \leq \bigg(1 - \frac{1 }{2M\gamma(\hat x)\bar L_{\alpha} \bar{L}^{1-\alpha}}\bigg) \mathbb{E}[D_f^{d^{k}}(x^{k},\hat{x})],
\end{align}
and
\begin{align}
\label{eq:it_conv}
   \mathbb{E}[\|x^{k} -\hat{x}\|_2^2] \leq 2\bigg(1 - \frac{1 }{2M \gamma(\hat x) \bar L_{\alpha} \bar{L}^{1-\alpha}}\bigg)^k D_f^{d^{0}}(x^{0},\hat{x})
\end{align}
\end{theorem}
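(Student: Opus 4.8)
The plan is to carry out the entire argument in the dual space on $\Psi$ and then transfer the result to the primal via Lemma~\ref{lm:IterateNormSuboptDual}, which identifies the primal Bregman distance $D_f^{d^k}(x^k,\hat x)$ with the dual suboptimality $\Psi(y^k)-\hat\Psi$. Since Algorithm~\ref{alg:BK} is exactly the block coordinate descent iteration~\eqref{eq:bcd} applied to $\Psi$ (under the identifications~\eqref{eq:primal_iterate}), the selection of block $i$ corresponds to the dual step $y^{k+1}=y^k-\tfrac{1}{L_i}\mathbf{U}_i\nabla_{(i)}\Psi(y^k)$ with $\nabla_{(i)}\Psi(y^k)=\mathbf{A}_{(i)}x^k-b_{(i)}$ and $L_i=\|\mathbf{A}_{(i)}\|_2^2$ from~\eqref{eq:Li}.

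First I would establish a one-step sufficient-decrease estimate. Applying the block-coordinate Lipschitz inequality (cf.~\eqref{eq:Lip} and~\eqref{eq:Li}) to $\Psi$ along block $i$ with step $1/L_i$ gives
\[
\Psi(y^{k+1}) \le \Psi(y^k) - \tfrac{1}{2L_i}\|\nabla_{(i)}\Psi(y^k)\|_2^2.
\]
Taking the conditional expectation over the random block index, drawn with probabilities $p_i=L_i^{\alpha}\big[\sum_j L_j^{\alpha}\big]^{-1}=L_i^{\alpha}/(M\bar L_\alpha)$ from~\eqref{eq:proba} and~\eqref{eq:barL}, produces the averaged decrease
\[
\mathbb{E}_{i}[\Psi(y^{k+1})] \le \Psi(y^k) - \tfrac{1}{2M\bar L_\alpha}\sum_{i=1}^M L_i^{\alpha-1}\|\nabla_{(i)}\Psi(y^k)\|_2^2.
\]

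The key step is to convert the probability-weighted block-gradient sum into the full Euclidean gradient norm, so that the PL inequality of Theorem~\ref{th:error_bounds_equality} can be invoked. I would seek a lower bound of the form $\sum_i L_i^{\alpha-1}\|\nabla_{(i)}\Psi\|_2^2 \ge \bar L^{\alpha-1}\|\nabla\Psi\|_2^2$ and then use $\|\nabla\Psi(y^k)\|_2^2 \ge \tfrac{1}{\gamma(\hat x)}(\Psi(y^k)-\hat\Psi)$, which together give
\[
\mathbb{E}_i[\Psi(y^{k+1})]-\hat\Psi \le \Big(1-\tfrac{1}{2M\gamma(\hat x)\bar L_\alpha\bar L^{1-\alpha}}\Big)(\Psi(y^k)-\hat\Psi).
\]
Taking total expectation and translating back through Lemma~\ref{lm:IterateNormSuboptDual} yields~\eqref{eq:bd_conv}. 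I expect this conversion to be the main obstacle: for $\alpha=1$ the weights are unity and no loss occurs, but for general $\alpha\in[0,1]$ one must control the interplay between the factors $L_i^{\alpha-1}$ and the individual block gradients, keeping track of the power-mean relation $\bar L_\alpha\le\bar L^\alpha$, so that precisely the advertised constant $\bar L_\alpha\bar L^{1-\alpha}$ is recovered rather than a looser one (e.g.\ one governed by $\max_i L_i$).

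Finally, \eqref{eq:it_conv} follows by unrolling the linear recursion~\eqref{eq:bd_conv} from the deterministic initialization $d^0=0$, $x^0=\nabla f^*(d^0)$, which gives $\mathbb{E}[D_f^{d^k}(x^k,\hat x)]\le(1-\tau)^k D_f^{d^0}(x^0,\hat x)$ with $\tau=\tfrac{1}{2M\gamma(\hat x)\bar L_\alpha\bar L^{1-\alpha}}$, and then applying the strong-convexity lower bound~\eqref{eq:D} with $\sigma=1$, namely $\tfrac12\|x^k-\hat x\|_2^2\le D_f^{d^k}(x^k,\hat x)$, under expectation.
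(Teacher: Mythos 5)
Your proposal follows, step for step, the route of the paper's own proof: the dual one-step sufficient decrease $\Psi(y^{k+1}) \le \Psi(y^k) - \tfrac{1}{2L_i}\|\nabla_{(i)}\Psi(y^k)\|_2^2$ from block Lipschitz continuity, conditional expectation under the sampling probabilities \eqref{eq:proba}, conversion of the weighted sum of block gradients into the full gradient norm, the PL inequality of Theorem~\ref{th:error_bounds_equality}, the identification $D_f^{d^k}(x^k,\hat x)=\Psi(y^k)-\hat\Psi$ from Lemma~\ref{lm:IterateNormSuboptDual}, and finally unrolling plus the strong-convexity bound \eqref{eq:D}. In structure the two arguments coincide.

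However, the step you single out as the main obstacle is exactly the point where the paper's argument is not sound, and your worry is justified rather than something you failed to see. The paper passes from $\frac{1}{2M\bar L_\alpha}\sum_{i=1}^M L_i^{\alpha-1}\|\nabla_{(i)}\Psi(y^k)\|_2^2$ to $\frac{1}{2M\bar L_\alpha \bar L^{1-\alpha}}\|\nabla\Psi(y^k)\|_2^2$ by invoking ``$\bar L \ge L_i$ for all $i\in[M]$''; but $\bar L=\frac{1}{M}\sum_j L_j$ is the \emph{average} of the block constants, and an average cannot dominate every term unless all the $L_i$ are equal. Since $\alpha-1\le 0$, the inequality that is actually available is $L_i^{\alpha-1}\ge L_{\max}^{\alpha-1}$ with $L_{\max}=\max_{i\in[M]} L_i$, and this is sharp: taking $\nabla\Psi(y)$ supported on a block attaining $L_{\max}$ shows that no constant better than $L_{\max}^{\alpha-1}$ works uniformly. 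The contraction factor one can legitimately obtain by this route is therefore $1-\frac{1}{2M\gamma(\hat x)\bar L_\alpha L_{\max}^{1-\alpha}}$, which is weaker than the advertised one because $L_{\max}\ge\bar L$; the stated constant $\bar L_\alpha\bar L^{1-\alpha}$ is recovered only when all block norms coincide, or when $\alpha=1$ (where the weights $L_i^{\alpha-1}$ disappear, as you note). So neither your proposal nor the paper's proof establishes \eqref{eq:bd_conv} with the constant as stated; both yield it with $\bar L^{1-\alpha}$ replaced by $L_{\max}^{1-\alpha}$, after which the rest of your argument (total expectation, transfer to the primal, unrolling, and \eqref{eq:D} to get \eqref{eq:it_conv}) goes through unchanged.
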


\begin{proof}
Let $y^k$ be the sequence generated by iteration~\eqref{eq:bcd} and consider Eq.~\eqref{eq:dual_grad}. The following relationship holds \[y^{k+1}_{(i)} -y^k_{(i)} = -\nabla \Psi_{(i)}(y^k).\] From the fact that the
gradient of the function $\Psi$ is blockwise Lipschitz continuous with constants $L_i = \|\mathbf{A}_{(i)}\|_2^2$, we have:
\begin{align*}
    \Psi(y^{k+1}) \leq \Psi(y^{k}) - \frac{1}{2\|\mathbf{A}_{(i)}\|_2^2} \|\nabla_{(i)} \Psi (y^k)\|^2,
\end{align*}
and thanks to Theorem~\ref{th:error_bounds_equality} we have the following relation
\begin{equation*}
    \Psi(y)-\min \Psi \leq \gamma(\hat x) \cdot \|\nabla \Psi(y)\|^2_2.
\end{equation*}
Then with $p_{M}(i) = p_i = L_i^{\alpha} \cdot \big[\sum\limits_{j=1}^M L_j^{\alpha}\big]^{-1}, \;\; \text{where} \;\; \alpha \in [0, 1]$, 
we have:
\begin{align*}
   \mathbb{E}_{k} \bigg[ \Psi(y^{k+1}) - \hat \Psi\bigg] &\leq \Psi(y^{k})- \hat \Psi - \sum\limits_{i=1}^{M}  \frac{p_i }{2L_{i}} \|\nabla_{(i)} \Psi (y^k)\|^2_2 \\
   &= \Psi(y^{k}) - \hat \Psi - \frac{1 }{2\sum\limits_{j=1}^M L_j^{\alpha}}\sum\limits_{i=1}^{M}  \frac{1 }{L_{i}^{1-\alpha}} \|\nabla_{(i)} \Psi (y^k)\|^2_2 \\
   &\leq \Psi(y^{k}) - \hat \Psi - \frac{1}{2M\bar L_{\alpha} \bar{L}^{1-\alpha}} \cdot \|\nabla \Psi (y^k)\|^2_2, \\
&\leq \bigg(1 - \frac{1 }{2M\gamma(\hat x)\bar L_{\alpha} \bar{L}^{1-\alpha}}\bigg) (\Psi(y^{k})  - \hat \Psi),
\end{align*}
where $\mathbb{E}_{k}$ denotes the conditional expectation on all indices sampled up to iteration $k$ and we used the relations   $M \bar L_{\alpha} = \sum_{j \in [M]} L^{\alpha}_{j}$ and  $\bar L \geq L_i, \, \forall i \in [M]$, with $\bar L = \bar L_{1}$. Then, using the rule of total expectation gives us:
\begin{align*}
     \mathbb{E} \big[ \Psi(y^{k+1})  - \hat \Psi \big] \leq \bigg(1 - \frac{1 }{2M\gamma(\hat x)\bar L_{\alpha} \bar{L}^{1-\alpha}}\bigg) \cdot \mathbb{E} \big[ \Psi(y^{k})    - \hat \Psi\big].
\end{align*}
Furthermore, from Lemma~\ref{lm:IterateNormSuboptDual}, it holds $D_f^{d^k}(x^k,\hat x) = \Psi(y^k)- \hat \Psi$, which gives Eq.~\eqref{eq:bd_conv}. 
Unrolling Eq.~\eqref{eq:bd_conv} give us :
\begin{align*}
    \mathbb{E}[D_f^{d^{k}}(x^{k},\hat{x})] \leq \bigg(1 - \frac{1 }{2M\gamma(\hat x)\bar L_{\alpha} \bar{L}^{1-\alpha}}\bigg)^k \cdot D_f^{d^{0}}(x^{0},\hat{x}).
\end{align*}
The second estimate  follows from the fact that $f$ is $1-$strongly convex, hence satisfies $\frac{1}{2} \|x^k - \hat x\|^2_2 \leq D^{d^k}_{f}(x^k, \hat x)$.
\end{proof}

\begin{remark}
The linear convergence rate from the previous theorem depends on the number of blocks $M$ and shows that considering blocks is beneficial for BK (Algorithm~\ref{alg:BK}). Indeed,  Theorem~\ref{th:linear_convergence_bk} tells us that taking $M = m$ gives us the slowest convergence rate and thus one should consider updating the blocks having a size greater than 1, i.e., $M<m$.  However, small $M$ implies large block sizes which increases the computational time of the subproblem. From our computational results from Section~\ref{sec:numerics} one can see that small $M$ indeed is beneficial in terms of number of full iterations.  Note that Theorem~\ref{th:linear_convergence_bk} generalizes the convergence results from~\cite{P15,LS19}. In fact, to get the linear convergence rate  in~\cite{P15} the authors needed the objective function to be smooth which is not the case in our theorem. The linear convergence result in~\cite{LS19} can be obtained from our Theorem~\ref{th:linear_convergence_bk} by setting $\alpha =1, M=m$. In this case $M \bar L = \|\mathbf{A}\|^2_F$, which corresponds to the result in~\cite[Theorem 3.2]{LS19}.
\end{remark}
A well-known drawback of accelerated first-order methods like Algorithm~\ref{alg:ARBK} is their oscillating behavior that slows them down (see e.g Figure~\ref{fig:500_784} and~\ref{fig:block500_784} below). As a first path towards solving these issues, restart schemes have been proposed in the literature, which have been demonstrated to improve convergence in practice by removing the oscillatory behavior \cite{necoara2019linear}. Practically speaking in a restart scheme, Algorithm~\ref{alg:ARBK} is stopped when a certain criterion is met and then restarted using the last value provided by the method as the new initial condition. In the next section, we present a restart scheme for ARBK (Algorithm~\ref{alg:ARBK}) that exhibits linear convergence for our problem~\eqref{eq:PB}.

\subsection{Restarted ARBK}
It has been shown that under the quadratic growth property accelerated gradient
methods can have improved complexity bounds in combination with an appropriate restarting procedure, see e.g.~\cite{fercoq2020restarting,necoara2019linear,necoara2022linear}. Our goal is to design a restarted ARBK akin to the restart strategies in~\cite{fercoq2020restarting,necoara2022linear} with similar properties based on the results in the previous sections. Having defined a set of integers $\{K_0, K_1,...\}$ of frequencies with which one wishes to restart the method, we can write the restarted version of ARBK. In the following version of ARBK in Algorithm~\ref{alg:RARBK1} we restart with these frequencies, but only update the dual variable if we have decreased the objective during the last restart period.

\begin{algorithm}[ht]
  \caption{Restarted-ARBK (RARBK)}
  \label{alg:RARBK1}
  \begin{algorithmic}[1]
    \STATE {Choose  $y^0 \in \RR^m$ and set $\Tilde{y}^0 = y^0$.} 
    \STATE {choose restart periods $\{K_0, \dots, K_r, \dots\} \subseteq \mathbb{N}$.}
    \STATE initialize $r=0$
    \REPEAT
    \STATE $\Bar{y}^{r+1} = \text{ARBK}(f, \Tilde{y}^{r}, K_r)$
    \vspace{0.2cm}
    \STATE update $
  \Tilde{y}^{r+1} =
  \begin{cases}
    \Bar{y}^{r+1}, & \text{if } \Psi(\Bar{y}^{r+1}) \leq \Psi(\Tilde{y}^{r}) \\\\
    \Tilde{y}^{r}, & \text{if } \Psi(\Bar{y}^{r+1}) > \Psi(\Tilde{y}^{r})
  \end{cases}
 $ 
    \vspace{0.2cm}
    \STATE increment $r =  r+1$
    \UNTIL{a stopping criterion is satisfied}
    \RETURN $\Tilde{x}^{r+1} = \nabla f^*(\mathbf{A}^{\top}\Tilde{y}^{r+1})$.
  \end{algorithmic}
\end{algorithm}

The next lemma gives an estimate of the progress we make in the primal variable in one restart period.
\begin{lemma}[Conditional restarting at $y^k$] \label{lm:cond_rest} Let $(y^k, x^k)$ be the iterates of ARBK (Algorithm~\ref{alg:ARBK}). Let 
\begin{align}
\label{eq:cond_restart}
  \Tilde{y} & = 
  \begin{cases}
    y^k, & \text{if } \Psi(y^k) \leq \Psi(y^0) \\
    y^0, & \text{if } \Psi(y^k) > \Psi(y^0)
  \end{cases}
\end{align}
and set $\Tilde{d} = \mathbf{A}^{\top} \Tilde{y}$, $\Tilde{x} = \nabla f^{*}(\Tilde{d})$.
Then, we have
\begin{equation}
\label{eq:cond_restart_inq}
    \mathbb{E}[D^{\Tilde{d}}_{f}(\Tilde{x}, \hat x)] \leq \theta^2_{k-1} \Bigg(\frac{1-\theta_0}{\theta^2_0}  + \frac{L_{\text{max}}}{\gamma(\hat x)\theta^2_0} \Bigg)\cdot D^{d^{0}}_{f}(x^{0}, \hat x),
\end{equation}
with $L_{\text{max}} = \max\limits_{i \in [M]} \|\mathbf{A}_{(i)}\|_2^2, \theta_0 = \frac{1}{M}$. Moreover, given $\zeta <1$, if 
\begin{equation}
\label{eq:nb_of_steps}
    k \geq \frac{2}{\theta_0} \Bigg(\sqrt{\frac{L_{\text{max}}+\gamma(\hat x)}{\zeta \gamma(\hat x)}} -1\Bigg) +1,
\end{equation}
then $\mathbb{E}[D^{\Tilde{d}}_{f}(\Tilde{x}, \hat x)] \leq \zeta \cdot D^{d^{0}}_{f}(x^{0}, \hat x)$.
\end{lemma}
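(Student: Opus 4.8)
The plan is to transfer the dual convergence estimate of Lemma~\ref{lm:fercoq} into the primal Bregman distance through Lemma~\ref{lm:IterateNormSuboptDual}, and to exploit the conditional update~\eqref{eq:cond_restart} to replace the (possibly larger) quantity $\Psi(y^k)$ by $\Psi(\Tilde y)$. First I would observe that, by the very definition of $\Tilde y$, one has $\Psi(\Tilde y)\le\Psi(y^k)$ deterministically: on the event $\Psi(y^k)\le\Psi(y^0)$ we have $\Tilde y=y^k$, while on the complementary event $\Psi(\Tilde y)=\Psi(y^0)<\Psi(y^k)$. Since $\Tilde d=\mathbf{A}^\top\Tilde y$ and $\Tilde x=\nabla f^*(\Tilde d)$, Lemma~\ref{lm:IterateNormSuboptDual} yields $D_f^{\Tilde d}(\Tilde x,\hat x)=\Psi(\Tilde y)-\hat\Psi\le\Psi(y^k)-\hat\Psi$, so after taking expectations it suffices to control $\mathbb{E}[\Psi(y^k)-\hat\Psi]$.

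For that quantity I would invoke Lemma~\ref{lm:fercoq}, drop the nonnegative term $\tfrac{1}{2\theta_0^2}\mathbb{E}[\|z^k-\hat y\|^2_{\mathbf{B}}]$ from its left-hand side, and multiply through by $\theta_{k-1}^2$, obtaining $\mathbb{E}[\Psi(y^k)-\hat\Psi]\le\tfrac{\theta_{k-1}^2}{\theta_0^2}\bigl((1-\theta_0)(\Psi(y^0)-\hat\Psi)+\tfrac12\|y^0-\hat y\|^2_{\mathbf{B}}\bigr)$. Applying Lemma~\ref{lm:IterateNormSuboptDual} at $k=0$ gives $\Psi(y^0)-\hat\Psi=D_f^{d^0}(x^0,\hat x)$, so the only remaining task is to bound $\|y^0-\hat y\|^2_{\mathbf{B}}$ by the same primal Bregman distance.

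This last step is the main obstacle. Because the estimate of Lemma~\ref{lm:fercoq} is valid for \emph{any} $\hat y\in\mathcal{Y}^*$ while the left-hand side of~\eqref{eq:cond_restart_inq} does not depend on $\hat y$, I am free to pick $\hat y$ achieving $\|y^0-\hat y\|_2=\dist(y^0,\mathcal{Y}^*)$, and then estimate $\|y^0-\hat y\|^2_{\mathbf{B}}\le L_{\text{max}}\|y^0-\hat y\|_2^2=L_{\text{max}}\dist(y^0,\mathcal{Y}^*)^2$. Now $\Psi$ has a Lipschitz-continuous gradient and, by Theorem~\ref{th:error_bounds_equality}, satisfies the PL inequality~\eqref{eq:pl}; hence it also satisfies the quadratic growth condition~\eqref{eq:qg}, which converts the distance to the optimal set into a function-value gap, i.e. $\dist(y^0,\mathcal{Y}^*)^2\le\tfrac{2}{\gamma(\hat x)}(\Psi(y^0)-\hat\Psi)$. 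The delicate point here is tracking the PL-to-QG constant carefully so that $\tfrac12\|y^0-\hat y\|^2_{\mathbf{B}}\le\tfrac{L_{\text{max}}}{\gamma(\hat x)}D_f^{d^0}(x^0,\hat x)$; substituting this together with $\Psi(y^0)-\hat\Psi=D_f^{d^0}(x^0,\hat x)$ into the display of the previous paragraph and recalling $\theta_0=\tfrac1M$ produces exactly~\eqref{eq:cond_restart_inq}.

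For the second assertion I would read off a sufficient iteration count directly from~\eqref{eq:cond_restart_inq}. Using $1-\theta_0\le1$, the bracketed constant is at most $\tfrac{1}{\theta_0^2}\bigl(1+\tfrac{L_{\text{max}}}{\gamma(\hat x)}\bigr)=\tfrac{\gamma(\hat x)+L_{\text{max}}}{\gamma(\hat x)\theta_0^2}$, so it is enough to guarantee $\theta_{k-1}^2\cdot\tfrac{\gamma(\hat x)+L_{\text{max}}}{\gamma(\hat x)\theta_0^2}\le\zeta$. By the first inequality of Lemma~\ref{lm:sequence} we have $\theta_{k-1}\le\tfrac{2}{(k-1)+2/\theta_0}$; inserting this bound, taking square roots, and isolating $k$ turns the required inequality into precisely the threshold~\eqref{eq:nb_of_steps}. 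Hence for every $k$ satisfying~\eqref{eq:nb_of_steps} we conclude $\mathbb{E}[D_f^{\Tilde d}(\Tilde x,\hat x)]\le\zeta\,D_f^{d^0}(x^0,\hat x)$, as claimed.
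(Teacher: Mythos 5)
Your proposal is correct and follows essentially the same route as the paper's proof: bound $\Psi(\tilde y)\le\Psi(y^k)$ from the conditional update, transfer via Lemma~\ref{lm:IterateNormSuboptDual}, apply Lemma~\ref{lm:fercoq} with the nonnegative term dropped, control $\|y^0-\hat y\|^2_{\mathbf{B}}\le L_{\text{max}}\dist(y^0,\mathcal{Y}^*)^2$, convert the distance to a function-value gap via the PL-implies-quadratic-growth argument, and solve for $k$ using Lemma~\ref{lm:sequence}. The only cosmetic difference is that you fix $\hat y$ as the Euclidean projection of $y^0$ onto $\mathcal{Y}^*$ before passing to the $\mathbf{B}$-norm, whereas the paper works directly with $\dist_{\mathbf{B}}(y^0,\mathcal{Y}^*)$; both yield the same bound.
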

\begin{proof}
Since $\Psi$ satisfies now the PL inequality~\eqref{eq:pl} it also satisfies the quadratic growth condition~\cite[Theorem 2]{karimi2016linear}, i.e.,  
\begin{equation}
     \Psi(y^k) - \hat \Psi \geq  \frac{\gamma(\hat x)}{2} \text{dist}(y^k, \mathcal{Y}^*)^2.
\end{equation}
From Lemma \ref{lm:fercoq}, the following holds for the iterates of ARBK:
\begin{align*}
    \mathbb{E} [\Psi(y^{k}) - \hat \Psi] &\leq \theta^2_{k-1} \Bigg(\frac{1-\theta_0}{\theta^2_0} (\Psi(y^{0}) - \hat \Psi) + \frac{1}{2\theta^2_0} \text{dist}_{\mathbf{B}}(y^0 , \mathcal{Y}*)^2\Bigg) \\
    &\leq \theta^2_{k-1} \Bigg(\frac{1-\theta_0}{\theta^2_0} (\Psi(y^{0}) - \hat \Psi) + \frac{L_{\text{max}}}{2\theta^2_0} \text{dist}(y^0 , \mathcal{Y}*)^2\Bigg) \\
    &\leq \theta^2_{k-1} \Bigg(\frac{1-\theta_0}{\theta^2_0}  + \frac{L_{\text{max}}}{\gamma(\hat x)\theta^2_0} \Bigg)(\Psi(y^{0}) - \hat \Psi),
\end{align*}
where $\text{dist}_{\mathbf{B}}(y^0 , \mathcal{Y}*)$ denotes the distance of $y^0$ to the set $\mathcal{Y}*$ with respect to the norm $\|\cdot\|_{\mathbf{B}}$ with ${\mathbf{B}} = \textbf{Diag}(\|\mathbf{A}_{(1)}\|_2^2, \|\mathbf{A}_{(2)}\|_2^2, \dots, \|\mathbf{A}_{(M)}\|_2^2)$ and $L_{\text{max}} = \max_{i \in [M]} \|\mathbf{A}_{(i)}\|_2^2$. Furthermore from Lemma~\ref{lm:IterateNormSuboptDual}, it holds $D_f^{d^k}(x^k,\hat x) = \Psi(y^k)- \hat \Psi$ and from condition~\eqref{eq:cond_restart}, we have $D^{\Tilde{d}}_{f}(\Tilde{x}, \hat x) \leq D^{d}_{f}(x^k, \hat x)$ which gives Eq.~\eqref{eq:cond_restart_inq}. Condition~\eqref{eq:nb_of_steps} is equivalent to:
\begin{equation*}
    \frac{4}{(k-1+2M)^2} \Bigg(\frac{1-\theta_0}{\theta^2_0}  + \frac{L_{\text{max}}}{\gamma(\hat x)\theta^2_0} \Bigg) \leq \zeta.
\end{equation*}
\end{proof}

\begin{corollary}
\label{th:restart_lin_conv}
    Denote $K(\zeta) = \bigg\lceil{2M \bigg(\sqrt{\frac{L_{\text{max}}+\gamma(\hat x)}{\zeta \gamma(\hat x)}} -1\bigg) +1 \bigg\rceil}$. If the restart periods $\{K_0, \dots , K_r,\dots\}$ are all equal to $K(\zeta)$, then the iterates of Algorithm~\ref{alg:RARBK1} satisfy:
    \begin{equation*}
        \mathbb{E} [D^{\Tilde{d}^r}_{f}(\Tilde{x}^r, \hat x)] \leq \zeta^r \cdot D^{d^{0}}_{f}(x^{0}, \hat x),
    \end{equation*}
    where the iterate $\Tilde{x}^r$ is obtained after restarting $r$ times ARBK  performing each time $K(\zeta)$ iterations. 
\end{corollary}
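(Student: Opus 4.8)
The plan is to prove the corollary by induction on the number of restarts $r$, using Lemma~\ref{lm:cond_rest} as the one-step engine. The first observation is that the prescribed period $K(\zeta)$ is exactly the ceiling of the right-hand side of the bound~\eqref{eq:nb_of_steps} specialized to $\theta_0 = \tfrac{1}{M}$, since $\tfrac{2}{\theta_0} = 2M$. Hence $k = K(\zeta)$ satisfies~\eqref{eq:nb_of_steps}, and consequently whenever ARBK is run for $K(\zeta)$ inner iterations from a fixed dual starting point and the conditional-restart rule~\eqref{eq:cond_restart} is applied, Lemma~\ref{lm:cond_rest} guarantees a contraction of the Bregman distance by the factor $\zeta$ in expectation.

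The second step is to match one restart period of RARBK (Algorithm~\ref{alg:RARBK1}) with the hypotheses of Lemma~\ref{lm:cond_rest}. For each $r$, the call $\Bar{y}^{r+1} = \text{ARBK}(f, \Tilde{y}^r, K(\zeta))$ runs exactly $K(\zeta)$ ARBK iterations from the initial point $\Tilde{y}^r$, and the update in line~6 producing $\Tilde{y}^{r+1}$ is precisely the conditional-restart rule~\eqref{eq:cond_restart} with $y^0$ replaced by $\Tilde{y}^r$ and $y^k$ by $\Bar{y}^{r+1}$. Letting $\mathcal{F}_r$ denote the $\sigma$-algebra generated by all block indices sampled during the first $r$ restart periods (so that $\Tilde{y}^r$, $\Tilde{d}^r = \mathbf{A}^\top \Tilde{y}^r$ and $\Tilde{x}^r = \nabla f^*(\Tilde{d}^r)$ are $\mathcal{F}_r$-measurable), I would apply Lemma~\ref{lm:cond_rest} conditionally on $\mathcal{F}_r$ to obtain
\[
\mathbb{E}\big[D^{\Tilde{d}^{r+1}}_{f}(\Tilde{x}^{r+1}, \hat x)\ \big|\ \mathcal{F}_r\big] \leq \zeta \cdot D^{\Tilde{d}^{r}}_{f}(\Tilde{x}^{r}, \hat x).
\]

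The third step is to take total expectations via the tower property and unroll. Applying $\mathbb{E}[\,\cdot\,]$ to both sides gives $\mathbb{E}[D^{\Tilde{d}^{r+1}}_{f}(\Tilde{x}^{r+1}, \hat x)] \leq \zeta \cdot \mathbb{E}[D^{\Tilde{d}^{r}}_{f}(\Tilde{x}^{r}, \hat x)]$, and iterating this contraction from $r$ down to $0$, with $\Tilde{x}^0 = x^0$ and $\Tilde{d}^0 = d^0$, yields the claimed bound
\[
\mathbb{E}\big[D^{\Tilde{d}^{r}}_{f}(\Tilde{x}^{r}, \hat x)\big] \leq \zeta^r \cdot D^{d^0}_{f}(x^0, \hat x).
\]

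The main obstacle I anticipate is the careful handling of the conditioning. Lemma~\ref{lm:cond_rest} is stated for a \emph{deterministic} initial point $y^0$, whereas in RARBK the starting point $\Tilde{y}^r$ of each period is itself a random variable depending on all previous restarts. Making the conditional application rigorous requires the Markov-type property that, conditioned on $\mathcal{F}_r$ (hence on $\Tilde{y}^r$), the fresh block indices drawn during the $(r+1)$-st period are independent of the past, so that the inner ARBK run behaves exactly as one started from the fixed point $\Tilde{y}^r$. This independence is what licenses substituting the random $\Tilde{y}^r$ for $y^0$ inside the conditional expectation; once it is established, the remaining induction is routine.
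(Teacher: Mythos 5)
Your proposal is correct and follows essentially the same route as the paper: recognize that $K(\zeta)$ realizes the threshold~\eqref{eq:nb_of_steps} with $\theta_0 = \tfrac{1}{M}$, apply Lemma~\ref{lm:cond_rest} to each restart period (noting that line~6 of Algorithm~\ref{alg:RARBK1} is exactly the conditional-restart rule~\eqref{eq:cond_restart}), and unroll the resulting contraction. The paper's own proof is just this recursion stated in two sentences; your additional care with the filtration and tower property makes rigorous a conditioning step that the paper leaves implicit, but it is the same argument.
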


\begin{proof}
By the definition of $\Tilde{x}^r$, we know that for all $r$, $D^{\Tilde{d}^{r+1}}_{f}(\Tilde{x}^{r+1}, \hat x) \leq \zeta  \cdot D^{\Tilde{d}^{r}}_{f}(\Tilde{x}^{r}, \hat x)$. Applying recursively Lemma~\ref{lm:cond_rest} gives us the desired linear convergence.
\end{proof}
\noindent Note that the choice $K(\zeta)$ from the previous corollary is not known to be optimal, but such a choice guarantees linear convergence. From relation~\eqref{eq:D} and Corollary~\ref{th:restart_lin_conv} we can also derive a linear rate in terms of
the expected quadratic distance of this primal sequence $\Tilde{x}^r$ to the optimal solution $\hat x$
\begin{equation*}
    \mathbb{E}[\|\Tilde{x}^r - \hat x \|_2^2] \leq 2\zeta^r \cdot D^{d^{0}}_{f}(x^{0}, \hat x)
\end{equation*}
The iterate $\Tilde{x}^r$ is obtained after running $r$ times Algorithm~\ref{alg:RARBK1} of $K(\zeta)$ iterations, which suggests $\zeta = e^{-2}$, or equivalently a fixed restart period (see also \cite{necoara2019linear}): 
\begin{equation}
\label{eq:nb_of_steps1}
  K^{*} = \bigg\lceil{2eM \bigg(\sqrt{\frac{L_{\text{max}} +\gamma(\hat x)}{ \gamma(\hat x)}} -1\bigg) +1 \bigg\rceil}  
\end{equation}

\noindent Even though the quantity $\gamma(\hat{x})$ is not known, Corollary~\ref{th:restart_lin_conv} shows that expected linear convergence can be obtained for restarted ARBK with a long enough period $K$. In fact, we can easily see that those restarting periods depend on the number of blocks $M$ and hence block variants, i.e.,  $M < m$, have shorter restart periods, meaning faster convergence. Hence, it is beneficial to work with block of coordinates. Further, we present a restarting variation of ARBK as proposed in~\cite{fercoq2020restarting}, which does not require explicit knowledge of $\gamma(\hat{x})$. Finally, we fix the length of the first epoch, $K_0$, to
\begin{equation*}
  K_0 = \bigg\lceil{2eM \bigg(\sqrt{\frac{L_{\text{max}} +\bar \gamma(\hat x)}{\bar  \gamma(\hat x)}} -1\bigg) +1 \bigg\rceil}  
\end{equation*}
where $\bar \gamma(\hat x)$ is an estimate  of the unknown constant $\gamma(\hat x)$,  which cannot be chosen arbitrarily. The true value of $\gamma(\hat x)$ can be estimated by some $\bar \gamma(\hat x)$ in Theorem~\ref{th:error_bounds_equality} using Eq.~\eqref{eq:EB}.

\begin{theorem}
\label{th:linear_convergence_restart}
    Let the sequence $\{K_j\}_{j\geq0} \subset \mathbb{N}$ satisfy:
    \begin{enumerate}
        \item $K_{2^j - 1} = 2^j K_0$ \quad for all $j \in \mathbb{N}$
        \item $|\{0 \leq r < 2^p -1|\,\,K_r = 2^jK_0\}| = 2^{p-1-j}$ \quad for all $j \in \{0, 1, \dots, p-1\}$.
    \end{enumerate}
For example, we can consider, $K_0 = K_0, K_1=2^1K_0, K_2=K_0, K_3=2^2K_0$ and so on. Then, after $p$ epochs of Algorithm~\ref{alg:RARBK1} we have the following linear rate in expectation:
\begin{equation*}
        \mathbb{E} [D^{\Tilde{d}^k}_{f}(\Tilde{x}^k, \hat x)] \leq \bigg(e^{- \frac{4}{(p+2)2^{i}K_0}}\bigg)^k \cdot D^{d^{0}}_{f}(x^{0}, \hat x),
    \end{equation*}
with $i = \lceil \max(0, \text{log}_2(K^{*}/K_0)) \rceil$ and $k = \sum \limits_{j=0}^{2^p-1} K_j$ is the total number of iterations.
\end{theorem}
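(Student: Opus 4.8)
The plan is to analyze Algorithm~\ref{alg:RARBK1} one epoch at a time and to split the $2^{p}$ epochs (indexed $r=0,\dots,2^{p}-1$) into two groups according to whether their length $K_r$ reaches the sufficient restart length $K^{*}$ from~\eqref{eq:nb_of_steps1}. The first ingredient I would record is the \emph{monotonicity} built into the conditional update in step~6: it guarantees $\Psi(\Tilde{y}^{r+1}) \le \Psi(\Tilde{y}^{r})$ for every $r$, so that by Lemma~\ref{lm:IterateNormSuboptDual} we have $D^{\Tilde{d}^{r+1}}_{f}(\Tilde{x}^{r+1},\hat x) \le D^{\Tilde{d}^{r}}_{f}(\Tilde{x}^{r},\hat x)$ pathwise. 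Hence every epoch contracts the Bregman distance by a factor at most $1$, and in particular the short (``bad'') epochs with $K_r < K^{*}$ never hurt. For the long (``good'') epochs with $K_r \ge K^{*}$, I would apply Lemma~\ref{lm:cond_rest} with $\zeta = e^{-2}$, for which $K(e^{-2}) = K^{*}$ by~\eqref{eq:nb_of_steps1}, to the run of ARBK started at $\Tilde{y}^{r}$, giving the conditional estimate
\[
\mathbb{E}\big[D^{\Tilde{d}^{r+1}}_{f}(\Tilde{x}^{r+1},\hat x)\,\big|\,\Tilde{y}^{r}\big] \le e^{-2}\, D^{\Tilde{d}^{r}}_{f}(\Tilde{x}^{r},\hat x).
\]

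Next I would count the good epochs and the total work. Setting $i = \lceil \max(0,\log_2(K^{*}/K_0))\rceil$, a period of length $2^{j}K_0$ is good exactly when $j \ge i$. Using the counting hypothesis $|\{0\le r<2^{p}-1 : K_r=2^{j}K_0\}| = 2^{p-1-j}$, the number of good epochs among $0\le r<2^{p}-1$ equals $\sum_{j=i}^{p-1} 2^{p-1-j} = 2^{p-i}-1$; adding the final epoch $r=2^{p}-1$, whose length $K_{2^{p}-1}=2^{p}K_0 \ge 2^{i}K_0$ by the doubling identity $K_{2^{j}-1}=2^{j}K_0$, gives $N = 2^{p-i}$ good epochs (I assume $p\ge i$, the regime in which the bound is nontrivial). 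In parallel, the same counting hypothesis yields $\sum_{r=0}^{2^{p}-2} K_r = \sum_{j=0}^{p-1} 2^{p-1-j}\,2^{j}K_0 = p\,2^{p-1}K_0$, and adding $K_{2^{p}-1}=2^{p}K_0$ gives
\[
k = \sum_{j=0}^{2^{p}-1} K_j = p\,2^{p-1}K_0 + 2^{p}K_0 = (p+2)\,2^{p-1}K_0.
\]

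Finally I would chain the per-epoch estimates by the tower property over the $2^{p}$ epochs, multiplying the factors $e^{-2}$ on each of the $N$ good epochs and $1$ on the remaining ones, to obtain
\[
\mathbb{E}\big[D^{\Tilde{d}^{k}}_{f}(\Tilde{x}^{k},\hat x)\big] \le (e^{-2})^{N}\, D^{d^{0}}_{f}(x^{0},\hat x) = e^{-2\cdot 2^{p-i}}\, D^{d^{0}}_{f}(x^{0},\hat x).
\]
It then remains to rewrite the exponent per iteration: since $2\cdot 2^{p-i} = \tfrac{4}{(p+2)2^{i}K_0}\cdot (p+2)2^{p-1}K_0 = \tfrac{4}{(p+2)2^{i}K_0}\,k$, the right-hand side equals $\big(e^{-4/((p+2)2^{i}K_0)}\big)^{k} D^{d^{0}}_{f}(x^{0},\hat x)$, which is exactly the claim. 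I expect the main obstacle to be the \emph{combinatorial bookkeeping} that forces $N$, $k$ and the target exponent to cancel precisely: correctly using the doubling identity and the counting hypothesis both to count the good epochs and to sum the $K_r$, together with the observation (from the monotonicity above) that the short epochs may be legitimately dropped when chaining the conditional expectations. Once these are in place, the analytic estimate of Lemma~\ref{lm:cond_rest} enters only as a black box.
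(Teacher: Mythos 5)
Your proof is correct and follows essentially the same route as the paper's: both split the $2^p$ epochs into those with $K_r \ge K^{*}$ (exactly $2^{p-i}$ of them, your $N$ being the paper's $c_i(p)$), apply Lemma~\ref{lm:cond_rest} with $\zeta=e^{-2}$ on those, invoke the monotone conditional update of Algorithm~\ref{alg:RARBK1} on the short epochs, compute $k=(p+2)2^{p-1}K_0$, and rewrite the exponent per iteration. Your write-up is in fact slightly more careful than the paper's, making explicit the tower-property chaining, the threshold check that $2^jK_0\ge K^{*}$ iff $j\ge i$, and the standing assumption $p\ge i$.
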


\begin{proof}
    Let us define 
     $\Delta_r = \mathbb{E} [D^{\Tilde{d}^r}_{f}(\Tilde{x}^r, \hat x)]$
     and
    \begin{equation*}
        c_i(p) = |\{l < 2^p -1|\,\,K_l \geq 2^i K_0\}| + 1 = 1 + \sum \limits_{k=i}^{p-1} 2 ^{p-1-k} = 2 ^{p-i}.
    \end{equation*}
    Note that $c_i(p)$ represents the number of restarts such that $K_l \geq K^{*}$, i.e. the number of restarts for
which Lemma~\ref{lm:cond_rest} applies. When
$K_l < K^{*}$, we invoke the guaranteed Bregman distance decrease enforced in Algorithm~\ref{alg:RARBK1}. Then it follows from Lemma~\ref{lm:cond_rest} that
\begin{equation}
\label{eq:variant_restart}
        \Delta_{2^p-1} \leq e^{-2c_i(p)} \cdot \Delta_{0}.
\end{equation}
Moreover, we have :
\begin{align*}
    k = \sum \limits_{j=0}^{2^p-1} K_j &= \sum \limits_{j=0}^{p-1} |\{0 \leq r < 2^p -1|\,\,K_r = 2^jK_0\}| \cdot 2^j K_0 + K_{2^p -1} \\
    &= \sum \limits_{j=0}^{p-1} 2^{p-1-j} \cdot 2^j K_0 + K_{2^p -1} \\
    &= (p+2)2^{p-1}K_0
\end{align*}
and the relation~\eqref{eq:variant_restart} implies:
\begin{equation*}
    \Delta_{k} \leq \bigg(e^{-2 \frac{c_i(p)}{\sum_{j=0}^{2^p-1} K_j}} \bigg)^k \Delta_{0} = \bigg(e^{- \frac{2^{p-i+1}}{(p+2)2^{p-1}K_0}}\bigg)^k \Delta_{0}
\end{equation*}
which confirms the above result.
\end{proof}

\begin{remark}
    \begin{enumerate}
        \item The linear convergence rate stated in Theorem~\ref{th:linear_convergence_bk} clearly implies the following estimate on the total number of iterations required by BK to obtain an $\varepsilon$-suboptimal solution in expectation:
        \begin{equation*}
            \mathcal{O}\bigg(2M \gamma(\hat x) \bar L_{\alpha} \bar{L}^{1-\alpha} \text{log}\bigg(\frac{1}{\varepsilon}\bigg)\bigg).
        \end{equation*}

        \item It follows from Theorem~\ref{th:linear_convergence_restart} that an upper bound on the total number of iterations performed by the Restarted-ARBK scheme to attain an $\varepsilon$-suboptimal solution in expectation is given by:
        \begin{equation*}
            \mathcal{O}\bigg( M \sqrt{\frac{L_{\text{max}}}{\gamma(\hat x)}} \text{log}\bigg(\frac{1}{\varepsilon}\bigg) \text{log}_2 \bigg(\text{log}\bigg(\frac{f(\hat x)}{\varepsilon}\bigg)\sqrt{\frac{\bar \gamma(\hat x)}{\gamma(\hat x)}}\bigg)\bigg).
        \end{equation*}

        \item If we assume that $f(\hat x) \leq 1$ and $\gamma(\hat x)$ is known, then the Restarted-ARBK is better than BK when $\text{log}_2 (\text{log}(1/\varepsilon)) \leq 2\gamma \sqrt{L_{\text{max}}\gamma}$.  For instance, let's suppose that the rows of the matrix are normalized and set $M=m$, then $L_{\text{max}}=1$. If $\gamma = 10^{1/6}$, then Restarted-ARBK has a better worst-case complexity than ARBK for all accuracies $\varepsilon > 10^{-6}$, $\gamma = 10^{1/3}$ for $\varepsilon > 10^{-35}$ and $\gamma = 10^{2/3}$ for $\varepsilon > 10^{-455391}$.
    \end{enumerate}
\end{remark}

\section{Experiments}
\label{sec:numerics}
In this section, we study the computational behaviour of the proposed algorithms, BK (Algorithm~\ref{alg:BK}), ARBK (Algorithm~\ref{alg:ARBK}), and RARBK (Algorithm~\ref{alg:RARBK1}), for the application of finding sparse solutions of linear systems. The first part shows a visual illustration on a simple $\mathbb{R}^2$ example.

\subsection{Visualizing the acceleration mechanism}
This section shows the graphical illustration of our acceleration mechanism. Our goal is to shed more light on how the proposed algorithm works in practice. For simplicity, we illustrate this by comparing BK and ARBK.

\begin{figure}[htb]
  \centering
  \includegraphics[width=0.45\textwidth]{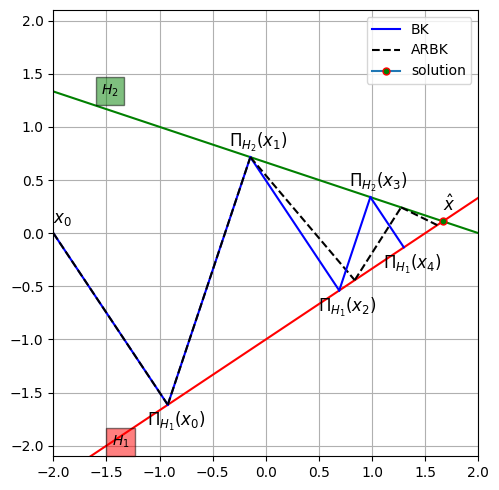}
  \caption{Graphical interpretation of the randomized Bregman Kaczmarz method (BK) and the accelerated randomized Bregman Kaczmarz method (ARBK) using our scheme in a simple example with only two hyperplanes $H_i = \{x: \langle a_i, x \rangle = b_i \}$, with $i=1,2$, and a unique solution $\hat x$. }
  \label{fig:kaczmarz-interpretation}
\end{figure}

\noindent In Fig.~\ref{fig:kaczmarz-interpretation} we present a simple $\mathbb{R}^2$ illustration of the difference between the workings of BK and ARBK with $f(x) = \frac{1}{2}\|x\|^2_2$. Our goal is to show graphically how the addition of interpolation leads to acceleration. In the example of Fig.~\ref{fig:kaczmarz-interpretation}, the performance of ARBK is similar to the performance of BK until iterate $x_3$. After this point, the interpolation parameter becomes more effective and the ARBK method accelerates. This behavior appears also in our experiments in the next section where we work with matrices with many rows. There we can notice that the interpolation parameter seems to become more effective after the first $m+1$ iterations, where $m$ is the number of hyperplanes. For the BK method, it is known that step 6 in Algorithm~\ref{alg:BK} corresponds to the orthogonal projections of the iterates onto one of the hyperplanes. The ARBK method is also doing orthogonal projections initially and after some iterations starts doing oblique projections.

\subsection{Numerical Experiments}
\noindent We present several experiments to demonstrate the effectiveness of Algorithms~\ref{alg:BK}, \newline ~\ref{alg:ARBK} and~\ref{alg:RARBK1} under various conditions\footnote{The code that produces the figures in this paper is available at \url{https://github.com/tondji/arbk}.}. The simulations were performed in \texttt{Python} on an Intel Core i7 computer with 16GB RAM. For all the experiments we consider $f(x) = \lambda \cdot \|x\|_1 + \frac{1}{2}\|x\|^2_2$, where $\lambda$ is the sparsity parameter.

\subsubsection{Synthetic experiments}
In the first part, synthetic data for the experiments is generated as follows:
all elements of the data matrix $\mathbf{A} \in \RR^{m\times n}$ are chosen independent and identically distributed from the standard normal distribution $\mathcal{N}(0, 1)$. We constructed overdetermined, square, and underdetermined linear systems. To construct sparse solutions $\hat x \in \RR^n$, we generate a random vector $y$ from the standard normal distribution $\mathcal{N}(0, 1)$ and we set $\hat x = S_{\lambda}(\mathbf{A}^\top y)$, which comes from Eq.~\eqref{eq:opt_sol} and the corresponding right hand side is $b = \mathbf{A}\hat x \in \RR^m$. For each experiment, we run independent trials each starting with the initial iterate $x_0=0$ for the BK method, $y_0=0$ for other methods and the number of blocks $M$ will always be specified. We measure performance by plotting the relative residual error $\| \mathbf{A}x_k - b\|_2/\|b\|_2$ and the relative error $\|x_k - \hat x \|_2/\|\hat x\|_2$ against the number of epochs (number of passing through all rows of $\mathbf{A}$). In the tables, we report the time when one of the relative errors or relative residuals is below the given tolerance. All the experiments are run for a maximum number of epochs in case the tolerance \texttt{tol} is not reached. During every experiment, the $i$-th row block of the matrix $\mathbf{A}$ is sampled with probabilities according to Eq.~\eqref{eq:proba}. We compared the four following methods for a maximum of  $200*\max(m,n)$ full iterations (epochs):
\begin{enumerate}
    \item \textbf{BK}: The randomized Bregman-Kaczmarz method, i.e., Algorithm~\ref{alg:BK}, with $\lambda=15$ and $M \in \{m/4, m/2\}$. 
    \item \textbf{NRBK}: The Nesterov randomized Bregman-Kaczmarz method~\cite[Method ACDM in Section 5]{nesterov2012efficiency}, with $\lambda=15$ and $M \in \{m/4, m/2\}$. 
    \item \textbf{ARBK}: The accelerated randomized Bregman-Kaczmarz method, i.e., Algorithm~\ref{alg:ARBK}, with $\lambda=15$ and $M \in \{m/4, m/2\}$.
    \item \textbf{RARBK}: The restarted accelerated randomized Bregman-Kaczmarz method, i.e., Algorithm~\ref{alg:RARBK1}, with $\lambda=15$, $M \in \{m/4, m/2\}$ with restarting periods $K^* \in \{165M, 200M\}$.
\end{enumerate}
\noindent In Figure~\ref{fig:500_784} and Figure~\ref{fig:700_700}, we report relative errors and residuals of the Bregman Kaczmarz method (BK), the accelerated Bregman Kaczmarz method (ARBK), it restarted version (RARBK) and the Nesterov acceleration (NRBK) with block size equal to $M =m/4$ and $M=m/2$ for undertermined and squared matrices,  respectively. For the RARBK method, we chose as restart period $K^*=165M$ in Figure~\ref{fig:500_784} and $K^*=200M$ in Figure~\ref{fig:700_700}. Two phases can be observed for the RARBK method. At the beginning, we observe similar convergence to the ARBK method, where restarting does not help. Then acceleration kicks in after a nonnegligible time in fact after the first restarting period.
We observe that the ARBK method accelerates the BK method and the restarted version of ARBK, namely RARBK, gives us the fastest convergence, thus accelerating the ARBK method. From Table~\ref{tab:cpu_time500_784} and Table~\ref{tab:cpu_time700_700}, we also see that the RARBK method needs the least computational time to achieve a relative error of $10^{-6}$, followed by the ARBK method (in fact, RARBK is about 4 times faster than BK). For both the BK and ARBK method we observed a plateau after some epochs. Although both methods in theory should have their errors going towards zero, in practice the convergence is affected by the sparsity parameter $\lambda$,  which in this case is large. However,  if we run both methods for longer times we will see eventually their errors converging towards zero.

\medskip

\noindent In Figure~\ref{fig:block500_784} and Figure~\ref{fig:rblock500_784}, we analyze the effects of the number of blocks $M$ on the relative error and residual for Algorithm \ref{alg:ARBK} and Algorithm \ref{alg:RARBK1}. We used an underdetermined and consistent system, with $M \in \{m, m/2, m/4, m/10, m/100\}$, $\lambda = 15$ and we choose as restarting period $K^*=165M$. As the number of blocks $M$ decreases, i.e., increasing the number of rows used in one iteration, we see a corresponding decrease in the number of iterations needed to reach a certain accuracy, however at some point decreasing $M$ does not improve the method. Moreover, we observed that the RARBK compared to ARBK always needs fewer iterations to reach the same accuracy. These observations are in line with Table~\ref{tab:cpu_timeblock500_784} and Table~\ref{tab:cpu_timerblock500_784} in terms of computational times. A big dip in the relative error at the few last epochs is observed. This behaviour seems to appear also on numerical experiments for different types of problems in other works (see e.g.,  Fig 2 in ~\cite{fercoq2020restarting} and Fig 1 in ~\cite{necoara2022linear}). Our theory only shows linear convergence.

\begin{figure}[htb]%
    \centering
    \subfloat[\centering Relative Residual]{{\includegraphics[width=.45\linewidth]{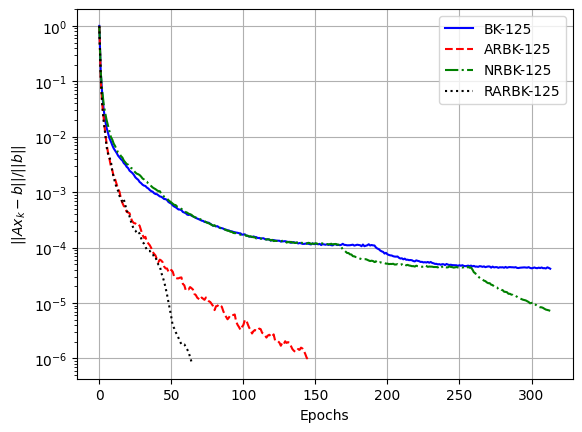} }}%
    \qquad
    \subfloat[\centering Relative Error]{{\includegraphics[width=.45\linewidth]{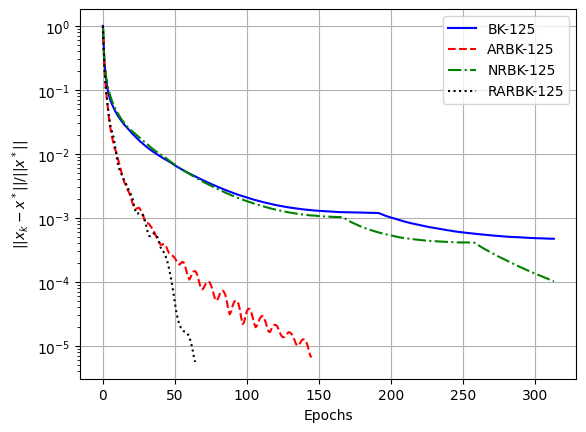} }}
    \caption{\small A comparison of Bregman Kaczmarz (BK-blue), accelerated randomized Bregman Kaczmarz (ARBK-red), Nesterov Acceleration scheme (NRBK-green) and restarted version of ARBK (RARBK-black), $m = 500, n = 784$, sparsity $s=408$, $M=m/4$, $\lambda=15$, $\kappa(A) =8.98$, \texttt{tol}=$10^{-6}$, $K^* = 165M$.}%
    \label{fig:500_784}%
\end{figure}

\begin{table}[htb]
\centering
\begin{tabular}{lr}
\toprule
 & CPU time (s) \\ \midrule
BK-125             &  46.58 $^*$                          \\
ARBK-125           &  24.65                                \\
NRBK-125           &  53.16 $^*$                           \\
RARBK-125          &  \textbf{11.86}                        \\ \bottomrule
\end{tabular}
\caption{CPU time until the relative error or residual falls below \texttt{tol} for Figure~\ref{fig:500_784}. Here $"*"$ indicates that the \texttt{tol}=$10^{-6}$ has not been reached by neither the relative error nor the relative residual after the maximum number of epochs.}
\label{tab:cpu_time500_784}
\end{table}

\begin{figure}[htb]%
    \centering
    \subfloat[\centering Relative Residual]{{\includegraphics[width=.45\linewidth]{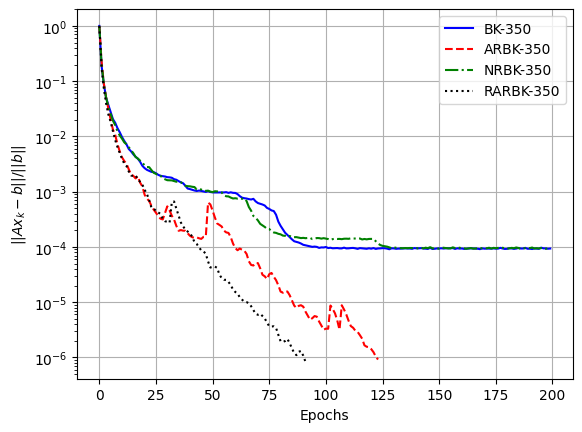} }}%
    \qquad
    \subfloat[\centering Relative Error]{{\includegraphics[width=.45\linewidth]{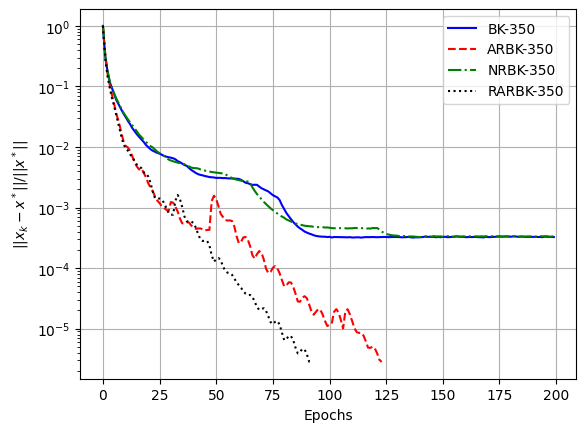} }}
    \caption{\small A comparison of Bregman Kaczmarz (BK-blue), accelerated randomized Bregman Kaczmarz (ARBK-red), the Nesterov Acceleration scheme (NRBK-green) and the restarted version of ARBK (RARBK-black), $m = 700, n = 700$, sparsity $s=404$, $M=m/2$, $\lambda=15$, $\kappa(A) =1150.06$, \texttt{tol}=$10^{-6}$, $K^*=200M$.}%
    \label{fig:700_700}%
\end{figure}

\begin{table}[htb]
\centering
\begin{tabular}{lr}
\toprule
 & CPU time (s)\\ \midrule
BK-350             & 56.70 $^*$                        \\
ARBK-350          & 35.21                         \\
NRBK-350           & 62.13 $^*$                        \\
RARBK-350          & \textbf{27.38}                         \\ \bottomrule
\end{tabular}
\caption{CPU time until the relative error or residual falls below \texttt{tol} for Figure~\ref{fig:700_700}. Here $"*"$ indicates that the \texttt{tol}=$10^{-6}$ has not been reached by neither the relative error nor the relative residual after the maximum number of epochs.}
\label{tab:cpu_time700_700}
\end{table}

\begin{figure}[htb]%
    \centering
    \subfloat[\centering Relative Residual]{{\includegraphics[width=.45\linewidth]{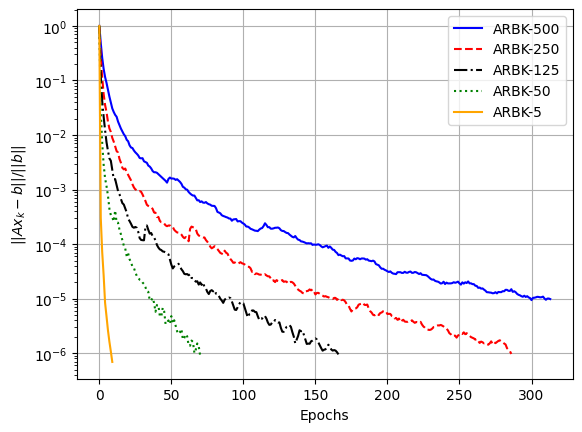} }}%
    \qquad
    \subfloat[\centering Relative Error]{{\includegraphics[width=.45\linewidth]{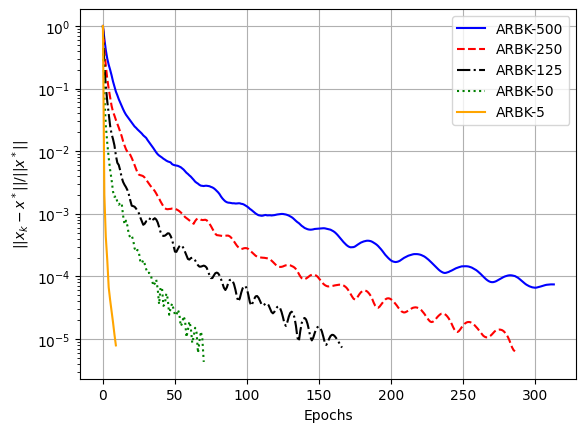} }}
    \caption{A comparison of the accelerated randomized Bregman Kaczmarz (ARBK) for different numbers of blocks $M$, with $m = 500, n = 784$, sparsity $s=408$, $\lambda=15$, $\kappa(A) =8.98$, \texttt{tol}=$10^{-6}$, $K^* = 165M$.}%
    \label{fig:block500_784}%
\end{figure}

\begin{table}[htb]
\centering
\begin{tabular}{lr}
\toprule
 & CPU time (s)\\ \midrule
ARBK-500            &     65.28 $^*$                        \\
ARBK-250           & 51.82                      \\
ARBK-125           & 31.08                         \\
ARBK-50          & 11.30                         \\ 
ARBK-5          & \textbf{2.01} \\\bottomrule
\end{tabular}
\caption{CPU time until the relative error or residual falls below \texttt{tol} for Figure~\ref{fig:block500_784}. Here $"*"$ indicates that the \texttt{tol}=$10^{-6}$ has not been reached by neither the relative error nor the relative residual after the maximum number of epochs.}
\label{tab:cpu_timeblock500_784}
\end{table}

\begin{figure}[htb]%
    \centering
    \subfloat[\centering Relative Residual]{{\includegraphics[width=.45\linewidth]{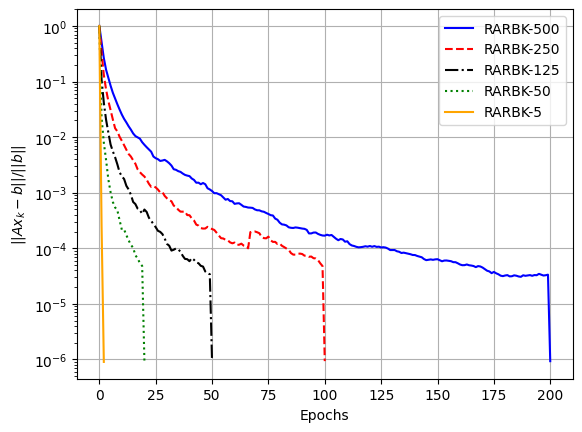} }}%
    \qquad
    \subfloat[\centering Relative Error]{{\includegraphics[width=.45\linewidth]{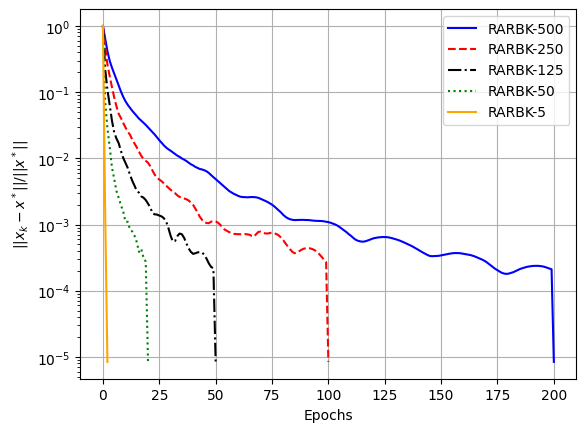} }}
    \caption{\small A comparison of restart accelerated randomized Bregman Kaczmarz (RARBK) for different numbers of blocks $M$, with $m = 500, n = 784$, sparsity $s=408$, $\lambda=15$, $\kappa(A) =8.98$, \texttt{tol}=$10^{-6}$, $K^* = 165M$.}%
    \label{fig:rblock500_784}%
\end{figure}

\begin{table}[htb]
\centering
\begin{tabular}{lr}
\toprule
 & CPU time (s)\\ \midrule
RARBK-500             & 41.94                         \\
RARBK-250           & 18.25                         \\
RARBK-125           & 7.87                         \\
RARBK-50          & 2.93                         \\ 
RARBK-5          & \textbf{0.53} \\\bottomrule
\end{tabular}
\caption{CPU time until the relative error or residual falls below \texttt{tol} for Figure~\ref{fig:rblock500_784}.}
\label{tab:cpu_timerblock500_784}
\end{table}

\subsubsection{Computerized tomography}
In this section, we consider the application of computerized tomography (CT), where the Radon transform implementation from the \texttt{Python} library \texttt{skimage} was used to build a system matrix for a parallel beam CT for a phantom of size $N\times N$ with $N=50$ and with $60$ equispaced angles. Hence, the system matrix $\mathbf{A}$ has size $3000\times 2500$, i.e., $m=3000$ and $n=2500$. We interpret the projection for each angle as one block $\mathbf{A}_{(i)}$, i.e., we have $M=60$ and each block has the size $m_{i}=50$. The ground truth solution $\hat x$ is fairly sparse (see below) and we generated the exact right-hand side simply as $b=\mathbf{A} \hat x$.
We used different methods for reconstruction, each run for $10$ epochs using the function $f(x) = \lambda \cdot \|x\|_1 + \frac{1}{2}\|x\|^2_2$:
\begin{enumerate}
    \item \textbf{BK}: Randomized Bregman-Kaczmarz method, i.e., Algorithm~\ref{alg:BK}, with $\lambda=30$ and $M=60$.

    \item \textbf{NRBK}: Nesterov randomized Bregman-Kaczmarz method~\cite[Method ACDM in Section 5]{nesterov2012efficiency}, with $\lambda=30$ and $M=60$.

    \item \textbf{ARBK}: Accelerated randomized Bregman-Kaczmarz method, i.e., Algorithm~\ref{alg:ARBK}, with $\lambda=30$ and $M=60$.

    \item \textbf{RARBK}: Restarted accelerated randomized Bregman-Kaczmarz method, i.e., Algorithm~\ref{alg:RARBK1}, with $\lambda=30$, $M=60$ and restarting periods $K^*=165M$.
\end{enumerate}
In this part, the BK algorithm represents our baseline comparison. It can be applied by just choosing $\lambda$ by trial and error to adapt the sparsity of the outcome. Figure~\ref{fig:ct_reconstruction} shows the ground truth data and the reconstructions by BK, NRBK, ARBK and RARBK. As can be seen, all methods produce a clear background due to the sparsity enforcing $f$. Both ARBK and RARBK methods lead to better reconstruction. Table~\ref{tab:cpu_time_ct} shows the CPU time for the four methods in order to reach a relative error or relative residual below \texttt{tol}. We notice again that RARBK is the fastest method (about 2.5 times faster than BK).

\begin{figure}[htb]
  \centering
  \includegraphics[width=1\textwidth]{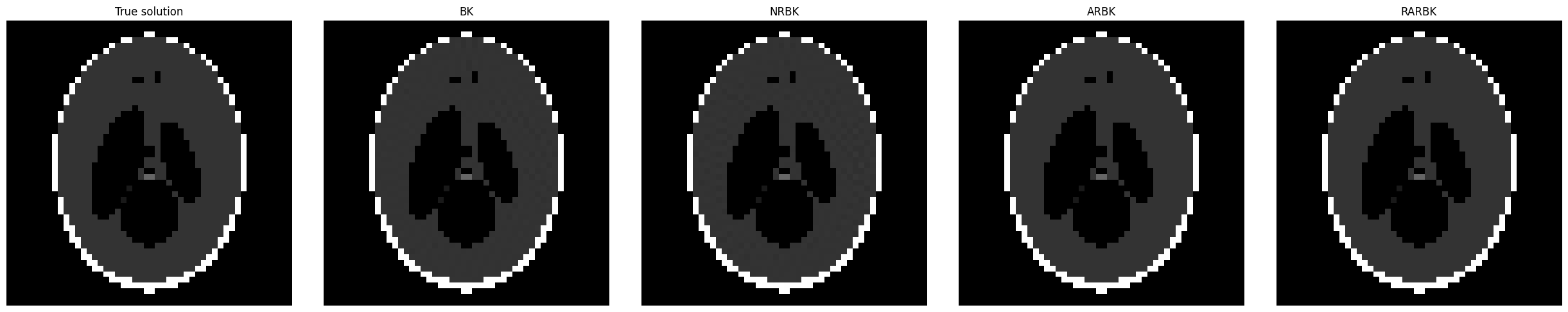}
\caption{Ground truth solution and reconstruction by the different methods in Figure~\ref{fig:ct_example}.}
  \label{fig:ct_reconstruction}
\end{figure}

\begin{table}[htb]
\centering
\begin{tabular}{lr}
\toprule
 & CPU time (s) \\ \midrule
BK-60             & 202.17 $^*$                         \\
ARBK-60          & 165.35                         \\
NRBK-60           & 203.63 $^*$                         \\
RARBK-60          & \textbf{84.84}                         \\ \bottomrule
\end{tabular}
\caption{CPU time until the relative error or residual falls below \texttt{tol} for Figure~\ref{fig:ct_example}. Here $"*"$ indicates that the \texttt{tol}=$10^{-5}$ has not been reached by neither the relative error nor the relative residual after the total number of epochs.}
\label{tab:cpu_time_ct}
\end{table} 

\noindent In Figure~\ref{fig:ct_example}, we show the relative residuals and the relative reconstruction errors for all methods. While the BK and NRBK methods bring the residual down to some value, the accelerated method (ARBK) and its restarted version (RARBK ) are able to go way below that and it seems that they even go down further. It is important to note that the choice of the restarting periods $K^*$ is crucial for making the RARBK work properly and accelerating the ARBK method. The parameter $K^*$ is difficult to estimate (see Eq.~\eqref{eq:nb_of_steps1}) as it is related to the quadratic growth of the dual objective $\gamma(\hat x)$. Nevertheless a restarting strategy with long enough periods $K^*$ leads to good results in most experiments.

\begin{figure}[htb]%
    \centering
    \subfloat[\centering Relative Residual]{{\includegraphics[width=.45\linewidth]{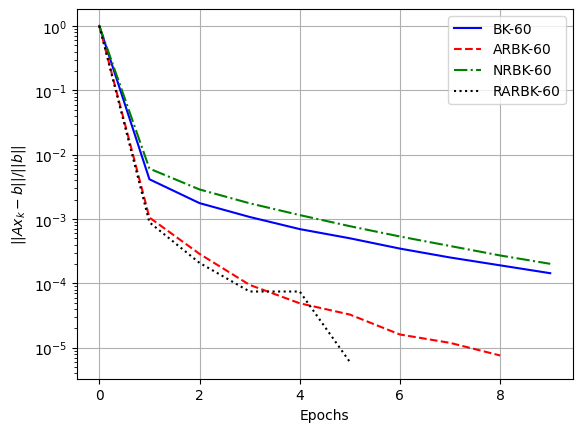} }}%
    \qquad
    \subfloat[\centering Relative Error]{{\includegraphics[width=.45\linewidth]{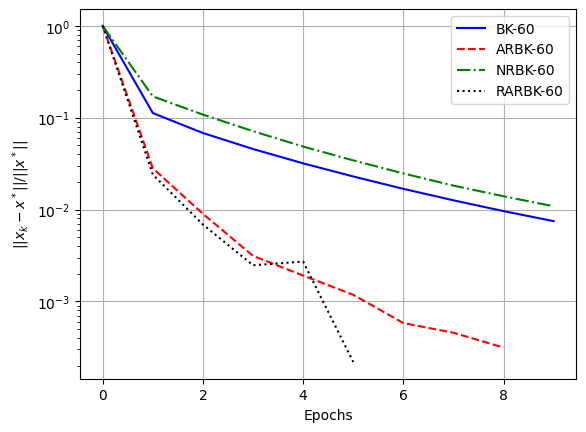} }}
    \caption{\small A comparison of Bregman Kaczmarz (BK-blue), accelerated randomized Bregman Kaczmarz (ARBK-red),  Nesterov Acceleration scheme (NRBK-green) and  restarted version of ARBK (RARBK-black), $m = 3000, n = 2500$, sparsity $s=912$, $M=60$, $\lambda=30$, $\kappa(A)=5411.08$, \texttt{tol}=$10^{-5}$, $K=165M$.}%
    \label{fig:ct_example}%
\end{figure}

\section{Conclusions}
\label{sec:conclusion}
In this work, we have proposed block (accelerated) randomized Bregman-Kaczmarz methods for solving convex linearly constrained optimization problems. Our methods are of Kaczmarz-type, i.e., they use a block of constraints in each iteration. We have used a recently proposed acceleration variant for the randomized coordinate descent method and transferred it to the primal space. A theoretical analysis of the convergence of the proposed nonaccelerated and accelerated algorithms was given. Numerical experiments have shown that the proposed methods are more efficient and faster than the existing methods for solving the same problem.

\bibliography{arbk_new}
\bibliographystyle{plain}

\end{document}